\newcommand{\ta}{\tilde{a}}
\newcommand{\tp}{\tilde{p}}
\newcommand{\TryPackage}[3]{\IfFileExists{#1.sty}{\usepackage{#1}#2}{#3}
}
\newcommand{\ep}{\epsilon}
\newcommand{\bbi}{{{\bf i}}}
\newcommand{\bbj}{{{\bf j}}}
\newcommand{\bbk}{{{\bf k}}}
\newcommand{\HH}{{\mathbb H}}
\newcommand{\FF}{{\mathbb F}}
\newcommand{\ZZ}{{\mathbb Z}}
\newcommand{\RR}{{\mathbb R}}
\newcommand{\CC}{{\mathbb C}}
\newcommand{\TT}{{\mathbb T}}
\newcommand{\RN}[1]{%
  \textup{\uppercase\expandafter{\romannumeral#1}}%
}
\newcommand{\Hom}{\operatorname{Hom}}
\newcommand{\Real}{\operatorname{Re}}
\theoremstyle{definition}
\newtheorem{df}{Definition}[section]
\theoremstyle{plain}
\newtheorem{thm}[df]{Theorem}
\newtheorem{cor}[df]{Corollary}
\newtheorem{lem}[df]{Lemma}
\newtheorem{prop}[df]{Proposition}
\date{}
\thanks{ }
 \author{Paul Kirk}
\address{Department of Mathematics, Indiana University, Bloomington, IN 47405} 
\email{pkirk@indiana.edu}
\subjclass[2010]{ 57M05, 53D30} 
\keywords{surface, punctured sphere, character variety, $SU(2)$}
\begin{document}

\title{On the traceless SU(2) character variety of the 6-punctured 2-sphere}

\thanks{The author thanks the Simons Foundation for its support through the collaboration grant 278714 and the Max-Planck-Institut f\"ur Mathematik for its support during the Fall 2015 semester.}

\begin{abstract} We exhibit the traceless $SU(2)$ character variety of a 6-punctured 2-sphere as a 2-fold branched cover of $\CC P^3$, branched over the singular Kummer surface, with the branch locus in $R(S^2,6)$ corresponding to the binary dihedral representations. This follows from an analysis of the map induced on $SU(2)$ character varieties by the 2-fold branched cover $F_{n-1}\to S^2$ branched over $2n$ points, combined with the theorem of Narasimhan-Ramanan which identifies $R(F_2)$ with $\CC P^3$. The singular points of $R(S^2,6)$ correspond to abelian representations, and we prove that each has a neighborhood in $R(S^2,6)$ homeomorphic to a cone on $S^2\times S^3$.

\end{abstract}

 \maketitle
 
 \section{Introduction}
 The traceless character variety of a $k$-punctured 2-sphere is the space $R(S^2,k)$ of conjugacy classes of $SU(2)$ representations of the fundamental group of $S^2\setminus \{a_i\}_{i=1}^k$ which send each loop encircling a puncture to a traceless matrix in $SU(2)$.   It contains a binary dihedral locus $R(S^2, k)_{B}$ and an abelian locus $R(S^2,k)_{\rm ab}$ (see Definitions \ref{bdloc} and \ref{abloc}) satisfying $R(S^2,k)_{\rm ab}\subset R(S^2,k)_{B}\subset R(S^2,k)$. 
 
 Similarly, for a closed orientable surface, let $R(F)$ denote the space of conjugacy classes of $SU(2)$ representations of the fundamental group of $F$. It contains an abelian locus $R(F)_{\rm ab}$ and a central locus $R(F)_{\rm cen}$, with $R(F)_{\rm cen}\subset R(F)_{\rm ab}\subset R(F)$.
 
Let $F_2$ denote the closed orientable genus 2-surface and $p:F_2\to S^2$ the 2-fold branched cover, branched over six points.  A theorem of Ramanan-Seshadri identifies $R(F_2)$ with $\CC P^3$ and its abelian locus $R(F_2)_{\rm ab}$ with the singular Kummer surface $K\cong \TT^4/(\ZZ/2)$. Note that $K$ is a singular complex surface with $16$ nodal singularities.
 
We summarize the  main results of this article in Theorem 1, which is a amalgam of the more precise statements of Theorems \ref{main}, \ref{link2},and \ref{thm4}.
 \medskip

 \noindent{\bf Theorem 1.} {\em 
 \begin{enumerate}
\item The  2-fold branched cover $p:F_2\to S^2$  branched over six points  induces a 2-fold branched cover $\tp:R(S^2,6)\to R(F_2)$, branched over $K\cong R(F_2)_{\rm ab}$ in $\CC P^3\cong R(F_2)$,  with branch set in $R(S^2,6)$ equal to the binary dihedral locus $R(S^2,6)_B\subset R(S^2,6)$. 
\item The set of 16 singular points of $R(S^2,6)$ is equal to the abelian locus $R(S^2,6)_{\rm ab}$, and its  image in $R(F_2)$ is the central locus $R(F_2)_{\rm cen}$.  With the identification $\CC P^3\cong R(F_2)$, the singular points in $R(S^2,6)$  correspond precisely to the preimages of the 16 nodal singularities of $K\subset \CC P^3$.  
\item A neighborhood of each of these 16 points is homeomorphic to a cone on $S^2\times S^3$. \end{enumerate}}

 \medskip

We also point the reader to Corollary \ref{extend}, which gives an explicit construction of  the representations in the fiber over a point of the 2-fold branched cover $\tp:R(S^2,6)\to R(F_2)$.

 \medskip

 The proof of Theorem 1 relies on  the details of the proofs of few results about the spaces $R(S^2,k)$, some of which are known,  some of which are folklore, and some of which appear to be new. We give careful but elementary proofs of the results in the following theorem, which summarizes the statements of Propositions \ref{prop1.1}, \ref{linkages}, and \ref{prop2.6}, and Theorems \ref{thm1.2} and \ref{prop2.2}. 
 
 \medskip

  \noindent{\bf Theorem 2.} {\em
 \begin{enumerate}
 \item If $k$ is odd, $R(S^2,k)$ is smooth of dimension $2k-6$  and admits a Morse function  with only even index critical points. 
 \item   $R(S^2,2n)\setminus R(S^2,2n)_{\rm ab}$  is smooth  of dimension $4n-6$.
 \item $R(S^2, 2n)_B$ is homeomorphic to $\TT^{2n-2}/(\ZZ/2)$ for the involution on the $2n-2$-torus which takes each coordinate to its complex conjugate.
 \item $R(S^2,2n)_{\rm ab}$ is a finite set with $2^{2n-2}$ points. Each $\rho\in R(S^2,2n)_{\rm ab}$ has a neighborhood in $R(S^2,2n)$ homeomorphic to a cone on $(S^{2n-3}\times S^{2n-3})/S^1$ for some free $S^1$ action, and a neighborhood in $R(S^2, 2n)_{B}$ homeomorphic to $\RR P^{2n-3}$.
 \item Multiplying by a central character $c:\pi_1(S^2\setminus\{a_i\}_{i=1}^{2n}\to \{\pm 1\}$ induces a $\ZZ/2$ action on $R(S^2, 2n)$ with fixed set $R(S^2,2n)_B$. The orbit space of this involution embeds in $R(F_{n-1})$.  The embedding is not surjective if $n>3$.
 \end{enumerate}}
 
 \medskip
 The spaces $R(S^2,k)$ for $k<6$ are identified in Section \ref{properties}.

 \medskip
 
 The results of this article are motivated by the project of constructing a Lagrangian-Floer theory for tangle decomposition of knots (see \cite{HHK1,HHK2}) which is expected to form a computationally simpler counterpart to the singular instanton Floer homology of knots constructed by Kronheimer-Mrowka \cite{KM1,KM2}.   
 In particular, the proposed Lagrangian-Floer theory takes place in the symplectic manifolds $R(S^2,2n)\setminus R(S^2,2n)_{\rm ab}$, and  the present article provides details about the structure of the ends of this manifold. 
 We refer the interested reader to the articles \cite{HHK1,HHK2,HK,FKP} for   more details of this relationship.

 \medskip

 Thanks to Dan Ramras,  Chris Herald, Matthias Kreck, and Michael Heusener for very helpful discussions. In particular, Ramras brought the reference  \cite{nara} to the author's attention. The author also wishes to acknowledge the influence of the results in the unpublished article \cite{JR} of Jacobsson and Rubinsztein on this article, in particular on his finding the correct statement of part iv of Theorem 2. 
 
\section{Properties of the traceless character varieties of punctured spheres}\label{properties}

Let $\HH$ denote the quaternions, and identify $SU(2)$ with the group of unit quaternions. The Lie algebra $su(2)$ is spanned by $\{\bbi,\bbj,\bbk\}$.  Let $\Real:\HH\to \RR$ be the function which returns the real part of a quaternion; this corresponds to half the trace.  Let $S^2_{\bbi}$ denote the conjugacy class of $\bbi$, so that 
$$S^2_{\bbi}=\Real^{-1}(0)=su(2)\cap SU(2)=\{a\bbi +b\bbj+c\bbk~|~ a^2+b^2+c^2=1\}=\{q\in SU(2)~|~ q^2=-1\}.$$

Each unit quaternion can be written in the form $e^{\alpha P}=\cos\alpha+\sin \alpha P$ for $P\in S^2_{\bbi}$.  This description is unique  for unit quaternions different than $\pm 1$   if one chooses $0<\alpha < \pi$.  The maximal abelian groups  in  $SU(2)$ are the circles $\{e^{\alpha P}\}$ for $P\in S^2_{\bbi}$. Two such subgroups either coincide or intersect in the center $\{\pm1\}$. In particular, if $p,q\in SU(2)$ commute and $p=e^{\alpha P}$ for some $\alpha$ satisfying $\sin\alpha\ne 0$, then $q=e^{\beta P}$ for some $\beta$.

 The function $\Real:SU(2)\to \RR$ on  unit quaternions 
corresponds to one half the trace on $SU(2)$ matrices. Hence $\Real(pq)=\Real(qp)$ for $p,q\in SU(2)$. The   preimages $\Real^{-1}(x),~ x\in [-1,1]$ are precisely the conjugacy classes in $SU(2)$.
Every circle subgroup of $SU(2)$ intersects $S^2_\bbi$ in two points, $\pm Q=S^2_\bbi\cap \{e^{\beta Q}\}.$ The conjugation action of $e^{\beta Q}$ on $S^2_\bbi$ is rotation of angle $2\beta$ with fixed points $\pm Q$.

The   bilinear form $su(2)\times su(2)\to \RR$ given by $-\Real(vw)$ is positive definite, and invariant under the conjugation action of $SU(2)$. The bilinear form $-\Real(vw)$ is not positive definite on all of $\HH$, but when $v\in su(2)$ and $w\in \HH$, $-\Real(vw)=0$ if and only if $v$ and $w$ are perpendicular.

\bigskip

\begin{df} Given a compact manifold $M$, the {\em $SU(2)$ character variety of $M$} is defined to be the topological space
$$R(M)=\Hom(\pi_1(M),SU(2))/_{\rm conjugation}$$

Given an   manifold $M$ containing a codimension two submanifold $L\subset M$, 
  call a loop $\mu\in \pi_1(M\setminus L)$ a {\em meridian} provided $\mu$ is homologous in $M\setminus L$ to one of the boundary curves of any small normal disk to $L$. Then  {\em the traceless $SU(2)$ character variety of $(M,L)$} is defined to be the space
  $$R(M,L)=\{\rho\in \Hom(\pi_1(M\setminus L),SU(2) )~|~ 
\Real(\rho(\mu))=0\text{ for all meridians } \mu \}/_{\rm conjugation}.$$
\end{df}

 Although they are traditionally called character varieties, the spaces $R(M)$ and $R(M,L)$ are in general real semi-algebraic affine sets (see \cite{LM,BCR, CulSha}).  We will use the following naive notion: call  a point $p$ in an affine real semi-algebraic set $V\subset \RR^N$ a {\em smooth point} if $p$ has a neighborhood $U$    in $\RR^N$  so that the pair $(U,U\cap V)$ is diffeomorphic to $(\RR^n,\RR^k)$ for some $k$.  Otherwise call $p$ a {\em singular point}. 

\medskip 
  
When $M$ and $L$ are oriented, one can define character varieties for codimension two pairs $(M,L)$ for other traces.  More precisely, fix a  conjugacy class in $SU(2)$  for each component of $L$ and restrict to homomorphisms  which take each oriented meridian to its corresponding conjugacy class.   These more general character varieties, which  make a brief appearance in the present article in Definition \ref{othertrace} and 
Proposition \ref{prop2.1} below,   are well studied objects when $(M,L)$ is a surface and $L$ set of points, and typically for {\em generic} choices of traces the traceless character varieties are smooth and stable under small changes of the choice of conjugacy classes.  The focus in this article on the traceless case is motivated by the relationship (explained in \cite{HHK1,HHK2}) to Kronheimer-Mrowka's {\em singular instanton homology} \cite{KM1,KM2}, which they prove is only well-defined in the traceless context.

 \medskip
 
 Let $\{a_i\}_{i=1}^k\subset S^2$ be a collection of $k$ distinct points in $S^2$, and  give $\pi_1(S^2\setminus \{a_i\}_{i=1}^k)$ the presentation
 \begin{equation}\label{presentation}
\pi_1(S^2\setminus \{a_i\}_{i=1}^k ) =\langle x_1,\dots,x_{k}~|~x_1x_2\cdots x_{k}=1\rangle,
\end{equation}
where  $x_i$ denotes the meridian which goes around the puncture $a_i$.
 
  To streamline notation, we will    denote the traceless $SU(2)$ character variety of $(S^2, \{a_i\}_{i=1}^k)$ simply by 
 $R(S^2, k)$. 
 
 \medskip

For small $k$ the following examples are known. When $k=0$, $R(S^2, 0)$ contains a single point, namely the trivial representation. For $k=1$, $R(S^2,1)$ is empty since $\pi(S^2\setminus\{a_1\})=1$ and hence admits no traceless representations. The space $R(S^2,2)$ is $S^2_{\bbi}/SU(2)$, which consists of a single point. Every $\rho\in R(S^2,3)$ is conjugate to the representation 
$x_1\mapsto \bbi, x_2\mapsto\bbj, x_3\mapsto -\bbk$.  Thus $R(S^2,3)$ consists of a single point. It is well known that $R(S^2,4)$ is the {\em  pillowcase}, the quotient of the 2-torus by the hyperelliptic involution, a space homeomorphic to the 2-sphere. For one proof see \cite{HHK1}.   

For $k=5$, it is known (see e.g. \cite{seidel4}) that $R(S^2,5)$ is homeomorphic to $\CC P^2\#5\overline{\CC P}^2$.  This can be seen from 
\cite[Section 4]{KK1} by observing first that Proposition \ref {prop1.1} below (specifically, the submersivity of $f$ and the fact that $R(S^2,5)_{\rm ab}$ is empty) implies  that $R(S^2,5)$ is diffeomorphic to $R_\ep(S^2,5)$, where $R_\ep(S^2,5)$ is defined by requiring representations to be traceless on $x_1, x_3, x_5$ but have  trace $-\ep$ for some small $\ep>0$ on $x_2, x_4$. Since $R(S^2,5)$ is smooth, $R(S^2,5)$ and $R_\ep(S^2,5)$ are diffeomorphic for small enough $\ep$, and the resulting configuration space of linkages in $S^3$  corresponds to the 12th polygon in \cite[Figure 7]{KK1}. Hence, by the results of that article, $R(S^2,5)$ is homeomorphic to $\CC P^2\#5\overline{\CC P}^2$.

 \medskip
   
\begin{df}\label{abloc}
Define the {\em abelian locus}   $R(S^2,k)_{\rm ab}\subset R(S^2,k)$ to
 consist of conjugacy classes of representations with abelian image. 
 \end{df}
 
 When $k$ is even,  $R(S^2,k)_{\rm ab}$ is a finite set containing $2^{k-2}$ points. It is  indexed by $\ep_2,\cdots,\ep_{k-2}\in\{\pm 1\}$, corresponding to the representations
$$x_1\mapsto \bbi,~ x_\ell\mapsto \ep_\ell\bbi \text{ for 1} <\ell<n, x_k\mapsto \ep_2\ep_3\dots\ep_{n-1}\bbi.$$ 
When $k$ is odd, $R(S^2,k)_{\rm ab}$ is empty, since a product of $k$ commuting traceless elements cannot equal 1.

 Define the polynomial  map 
  \begin{equation}\label{themapf}
f:(S^2_{\bbi})^{k-1}\to \RR, ~f(q_1,\dots, q_{k-1})=\Real(q_1\dots q_{k-1})
\end{equation}
 The map $f$ is invariant with respect to the diagonal $SU(2)$ conjugation action.

For convenience, denote $SU(2)/\{\pm 1\}$ by $PU(2)$. The conjugation action of $SU(2)$ on itself, and hence on representations, factors through $PU(2)$.

Let $ A\subset (S^2_{\bbi})^{k-1}$ denote the   set  
$$A=\{(\pm q,\dots,\pm q)~|~ q\in S^2_\bbi\}.$$ 
Thus $A$ is a disjoint union of $2^{k-2}$ 2-spheres, and $A/PU(2)$ is a collection of $2^{k-2}$ points.

Much of the following proposition is known; see e.g. \cite{Lin, Heusener2}. We include its simple proof for completeness.  
\begin{prop} \label{prop1.1} \hfill
 \begin{enumerate}

\item  The conjugation action of $PU(2) $ on $(S^2_{\bbi})^{k-1}\setminus A$ is free.
\item The restriction of the map $f$ of Equation (\ref{themapf}) to $(S^2_{\bbi})^{k-1}\setminus A$ is a submersion. 

  \item
The map 
$$\Phi: R(S^2,k)\to (S^2)^{k-1}/PU(2), ~\Phi(\rho)=[\rho(x_1),\dots, \rho(x_{k-1})]$$
induces an homeomorphism  $$R(S^2,k)\cong  f^{-1}(0)/PU(2) $$ which takes $R(S^2,k)_{\rm ab}$ to $A/PU(2)$. 

\item 
$R(S^2,k)\setminus R(S^2,k)_{\rm ab}$ is   smooth  of dimension $2k-6$.\end{enumerate}
\end{prop}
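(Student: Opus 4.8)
The plan is to prove the four statements in sequence, since each feeds into the next, and then derive item (iv) as a corollary of (i)--(iii). For item (i), I would argue that if $g\in SU(2)$ fixes a tuple $(q_1,\dots,q_{k-1})\in(S^2_\bbi)^{k-1}\setminus A$ under conjugation, then $g$ commutes with each $q_i$. If some $q_i\ne \pm q_j$ for some pair $i,j$, then the $q_i$ do not all lie in a single circle subgroup, so the only elements commuting with all of them lie in $\{\pm 1\}$ (using the fact, recalled in the preliminaries, that two maximal tori either coincide or meet only in the center); hence $g$ is central, so its image in $PU(2)$ is trivial. The only tuples with all $q_i$ in one circle and $q_i\in\{\pm q\}$ are exactly the points of $A$, which are excluded. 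So the $PU(2)$-action is free off $A$.

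For item (ii), I would compute the differential of $f$ at a point $q=(q_1,\dots,q_{k-1})\notin A$. Writing $q_i\in S^2_\bbi$, the tangent space $T_{q_i}S^2_\bbi$ is the orthogonal complement of $q_i$ inside $su(2)$, i.e. $\{v\in su(2): -\Real(q_iv)=0\}$. A short computation gives $df_q(0,\dots,v_i,\dots,0) = \Real(q_1\cdots q_{i-1}\,v_i\,q_{i+1}\cdots q_{k-1})$, which by the trace property equals $\Real(v_i\, q_{i+1}\cdots q_{k-1}q_1\cdots q_{i-1})$. So $df_q$ vanishes on $T_{q_i}S^2_\bbi$ for all $i$ if and only if, for each $i$, the quaternion $q_{i+1}\cdots q_{k-1}q_1\cdots q_{i-1}$ is a real multiple of $q_i$ (using that $-\Real(v w)=0$ for all $v\perp q_i$ in $su(2)$ forces $w\in\RR q_i$ when $w$ is written appropriately). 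I would then show that these conditions, taken together, force all the $q_i$ to lie in a common circle subgroup and in fact $q_i\in\{\pm q_1\}$, i.e. $q\in A$ — contradiction. Hence $df_q$ is nonzero, and since the target is $\RR$ this means $f$ is a submersion on $(S^2_\bbi)^{k-1}\setminus A$. This case analysis — showing that the simultaneous orthogonality conditions pin the tuple down to $A$ — is the main obstacle, and I expect it to require carefully splitting into the subcases $k$ even versus odd (when $k$ is odd the $q_i$ can all be equal without landing in the preimage $f^{-1}(0)$, so one must be careful that $A$ is still the full critical set of $f$, not just of $f|_{f^{-1}(0)}$).

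For item (iii), the map $\Phi$ is well-defined because $\rho$ is determined by $\rho(x_1),\dots,\rho(x_{k-1})$, the relation $x_1\cdots x_k=1$ forces $\rho(x_k)=(\rho(x_1)\cdots\rho(x_{k-1}))^{-1}$, and tracelessness of $\rho(x_k)$ is exactly the condition $\Real(\rho(x_1)\cdots\rho(x_{k-1}))=0$, i.e. $f(\rho(x_1),\dots,\rho(x_{k-1}))=0$. Conversely any tuple in $f^{-1}(0)$ defines such a $\rho$. So $\Phi$ descends to a continuous bijection $R(S^2,k)\to f^{-1}(0)/PU(2)$; it is a homeomorphism because both sides are compact Hausdorff (or one checks it is a quotient map directly). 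Abelian-image representations correspond precisely to tuples with all $q_i$ pairwise commuting and lying in a circle, and tracelessness then forces each $q_i=\pm q$, i.e. the tuple lies in $A$; conversely every point of $A/PU(2)$ lies in $f^{-1}(0)/PU(2)$ precisely when $k$ is even (matching the earlier count of $2^{k-2}$ abelian classes), giving $\Phi(R(S^2,k)_{\rm ab})=A/PU(2)$.

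Finally, item (iv) follows by assembling the pieces: by (iii), $R(S^2,k)\setminus R(S^2,k)_{\rm ab}\cong \big(f^{-1}(0)\setminus A\big)/PU(2)$. By (ii), $0$ is a regular value of $f|_{(S^2_\bbi)^{k-1}\setminus A}$, so $f^{-1}(0)\setminus A$ is a smooth manifold of dimension $\dim(S^2_\bbi)^{k-1}-1 = 2(k-1)-1 = 2k-3$. By (i), $PU(2)$ (dimension $3$) acts freely and smoothly on it; the action is also proper since $PU(2)$ is compact, so the quotient is a smooth manifold of dimension $(2k-3)-3 = 2k-6$. This completes the proof.
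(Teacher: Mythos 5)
Your treatment of items (i), (iii) and (iv) is correct and essentially the paper's: freeness holds because a tuple outside $A$ contains a non-commuting pair whose stabilizers meet only in the center; (iii) is the routine identification of representations with tuples in $f^{-1}(0)$; and (iv) is exactly the paper's ``the first three plus the implicit function theorem,'' which you have merely spelled out. Note also that (iv) only ever uses that $0$ is a regular value of $f$ on the complement of $A$ --- keep this in mind for what follows.

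The genuine gap is in your plan for item (ii). You propose to show that the critical locus of $f$ on all of $(S^2_{\bbi})^{k-1}$ is exactly $A$, and you rightly flag this as the delicate point --- but the statement you are aiming for is false, and no parity case analysis will rescue it. Take $k=4$ and the tuple $(\bbi,\bbj,\bbk)$: then $q_2q_3=\bbi$, $q_3q_1=\bbj$, $q_1q_2=\bbk$, so each cyclic product $w_i=q_{i+1}\cdots q_{k-1}q_1\cdots q_{i-1}$ is purely imaginary and proportional to $q_i$; hence your formula $df_q(0,\dots,v_i,\dots,0)=\Real(v_i\,w_i)$ vanishes for every $v_i\perp q_i$, yet $(\bbi,\bbj,\bbk)\notin A$ (of course $f(\bbi,\bbj,\bbk)=\Real(-1)=-1\ne 0$). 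So $f$ is \emph{not} a submersion on all of $(S^2_{\bbi})^{k-1}\setminus A$; the correct and sufficient claim is that $0$ is a regular value of $f$ restricted to that set, and your own computation delivers it once you feed in the constraint $f(q)=0$. Precisely: vanishing of $df_q$ forces the \emph{imaginary part} of each $w_i$ to lie in $\RR q_i$ (not $w_i$ itself --- $w_i$ lies in the circle subgroup $\{e^{\theta q_i}\}$); then $0=f(q)=\Real(q_iw_i)$ kills the $q_i$-component of $w_i$, so $w_i=\pm 1$; and comparing consecutive indices via $q_{i+1}w_{i+1}=w_iq_i$ gives $q_{i+1}=\pm q_i$ for all $i$, i.e.\ $q\in A$, the desired contradiction. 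The paper sidesteps the issue entirely by arguing pointwise on $f^{-1}(0)\setminus A$: at each such point it exhibits an explicit tangent direction (rotating a single coordinate $q_\ell$ or $q_{\ell+1}$ by $e^{tR}$ for a suitable $R\in S^2_{\bbi}$) along which the derivative of $f$ equals $-\sin\alpha\ne 0$, rather than characterizing the critical set.
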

\begin{proof} If $(q_1,\dots,q_{k-1})\in (S^2_{\bbi})^{k-1} \setminus A$, there exists a pair $q_i,q_j$ which do not commute. The intersection of their stabilizers is therefore $\{\pm 1\}$, which proves the first assertion. 

For the second assertion, fix $(q_1,\cdots, q_{k-1})\in f^{-1}(0)\setminus A$ and choose $\ell$ so that  $q_\ell\ne \pm q_{\ell+1}$. Let $x= q_{\ell+2}q_{\ell+3}\dots q_{k-1}q_1q_2\dots q_{\ell-1}$.
At least one of  $ q_\ell x$ and $q_{\ell+1}x$ is different from $\pm1$.

If $q_\ell x\ne \pm1$, then $xq_\ell\ne \pm1$, so that $xq_\ell=e^{\alpha R}$ for $R\in S^2_\bbi$ and $\sin\alpha\ne 0$. Then 
$$\begin{multlined} 0=\Real(q_1\dots q_{k-1})=\Real(q_{\ell+1}xq_{\ell})=\Real(q_{\ell+1}e^{\alpha R})=\cos\alpha \Real(q_{\ell+1})+\sin\alpha \Real(q_{\ell+1}R) \\ =\sin\alpha \Real(q_{\ell+1}R).\hskip1.2in\end{multlined}$$
 Define $q_{\ell+1}(t)=q_{\ell+1}e^{tR}$.  Then $\Real(q_{\ell+1}(t))=\cos t \Real(q_{\ell+1})+\sin t \Real(q_{\ell+1}R)=0$, so that $q_{\ell+1}(t)\in S^2_{\bbi}$. Moreover
 $$\begin{multlined} \Real(q_1\dots q_{\ell}q_{\ell+1}(t) q_{\ell+2}\dots q_{k-1})=
 \sin t \Real(q_1\dots q_{\ell}R q_{\ell+2}\dots q_{k-1})\\ =\sin t \Real(xq_{\ell}R) =\sin t \sin\alpha\Real(R^2)=-\sin t \sin \alpha.
\end{multlined} $$
 Hence $\tfrac{d}{dt}|_{t=0}f(q_1,\dots, q_{\ell+1}(t),\dots , q_{k-1})=-\sin\alpha\ne 0.$

 If $q_{\ell+1}x\ne \pm1$, then write $q_{\ell+1}x=e^{\alpha R}$ with $\sin\alpha\ne 0$.  Define $q_{\ell}(t)$ to be $q_\ell e^{tR}$.  
 A similar computation shows that $q_{\ell}(t)\in S^2_\bbi$ and $\tfrac{d}{dt}|_{t=0}f(q_1,\dots, q_{\ell}(t),\dots , q_{k-1})=-\sin\alpha\ne 0.$
 
The third assertion follows from the fact that if $f(q_1,\dots, q_{k-1})=0$, then the homomorphism $\pi_1(S^2\setminus \{a_i\})\to SU(2)$ sending $x_i$ to $q_i$ for $i<k$ and $x_k$ to $(q_1\dots q_{k-1})^{-1}$ lies in $R(S^2,k)$. Moreover, it lies in $R(S^2,k)_{\rm ab}$ if and only if the $q_i$ all commute, which happens exactly when $(q_1,\dots, q_{k-1})\in A$.

The fourth assertion follows from the first three and the implicit function theorem

\end{proof}

When $k$ is odd, $R(S^2,k)_{\rm ab}$ is empty and hence $R(S^2,k)$ is   smooth. Since $R(S^2,2n+1)$ is smooth, its diffeomorphism type is unchanged by small perturbations of the traceless condition. More precisely, changing the conditions $\Real(\rho(x_i))=0$ to $\Real(\rho(x_i))=\delta_i$ for some small non-zero $\delta_i$ does not change the character variety, since the conjugacy class defined by the condition $\Real=\delta$ is a 2-sphere close to $S^2_\bbi$ and the submersivity of $f$ is a stable property. The relation $x_1\cdots x_{2n+1}=1$ shows that these character varieties can be identified with the configuration space of linkages of length $r_i$ for some generic $r_i$ close to $\frac\pi 2$, as in \cite{KK1}.  
Theorem 3.1 of \cite{KK1} then implies the following. 
\begin{prop}\label{linkages}  $R(S^2, 2n+1)$ is a smooth $2n-6$ manifold which admits a Morse function with only even index critical points.  \qed
\end{prop}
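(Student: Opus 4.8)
The plan is to assemble the three facts the section has already lined up: that $R(S^2,2n+1)$ is smooth (Proposition~\ref{prop1.1}), that its diffeomorphism type is stable under small perturbations of the traceless condition, and that a generic such perturbation identifies it with a configuration space of linkages in $S^3$, to which Theorem~3.1 of \cite{KK1} applies verbatim. First I would record that $R(S^2,2n+1)_{\rm ab}=\emptyset$: an abelian representation sends each $x_i$ into a common maximal torus, so by tracelessness $\rho(x_i)=\pm Q$ for a fixed $Q\in S^2_\bbi$, and then $\rho(x_1)\cdots\rho(x_{2n+1})=\pm Q^{2n+1}=\pm(-1)^nQ\ne 1$, contradicting the relation $x_1\cdots x_{2n+1}=1$. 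By Proposition~\ref{prop1.1} it follows that $R(S^2,2n+1)=f^{-1}(0)/PU(2)$, with $f$ a submersion on a neighbourhood of the compact set $f^{-1}(0)$ and $PU(2)$ acting freely, so that $R(S^2,2n+1)$ is a smooth closed manifold of the dimension recorded in part (iv) of Proposition~\ref{prop1.1}.

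Second I would show that relaxing the conditions $\Real(\rho(x_i))=0$ to $\Real(\rho(x_i))=\delta_i$, for a small vector $\delta$, does not change the diffeomorphism type. Writing $\delta_i=\cos r_i$ with each $r_i$ near $\tfrac\pi2$, the space $R_\delta(S^2,2n+1)$ of such conjugacy classes is, exactly as in the proof of Proposition~\ref{prop1.1}, identified with $f_\delta^{-1}(0)/PU(2)$, where $f_\delta$ is the analogue of $f$ on the product of the conjugacy classes $\Real^{-1}(\delta_i)$. Submersivity of $f_\delta$ along its zero set is an open condition and so persists for $\delta$ small, and the $PU(2)$-action stays free: an abelian configuration with these half-traces would force $\sum_i\pm r_i\in 2\pi\ZZ$, which is impossible once each $r_i$ is close enough to $\tfrac\pi2$ (the sum of an odd number of terms near $\pm\tfrac\pi2$ stays near an odd multiple of $\tfrac\pi2$, hence away from $2\pi\ZZ$). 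A parametrised implicit function theorem (or Ehresmann's theorem) applied to the family $\{\,f_\delta^{-1}(0)/PU(2)\,\}$ over a small ball of $\delta$'s then gives $R_\delta(S^2,2n+1)\cong R(S^2,2n+1)$; this is the same reasoning used for $R(S^2,5)$ earlier in the section.

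Third I would invoke the classical correspondence between representations of a punctured sphere with prescribed meridian conjugacy classes and polygon spaces: choosing the $\delta_i$ so that $r_1,\dots,r_{2n+1}$ are generic as well as close to $\tfrac\pi2$, the relation $x_1\cdots x_{2n+1}=1$ presents $R_\delta(S^2,2n+1)$ as the configuration space of closed linkages in $S^3\cong SU(2)$ with arm lengths $r_1,\dots,r_{2n+1}$, as in \cite{KK1}. For generic arm lengths Theorem~3.1 of \cite{KK1} supplies a Morse function on this space all of whose critical points have even index; pulling it back along the diffeomorphism of the previous step produces the asserted Morse function on $R(S^2,2n+1)$.

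The genuine content here is external — it is Theorem~3.1 of \cite{KK1} — so what remains is bookkeeping, and I expect the step needing the most care to be the perturbation argument of the second paragraph: one must ensure that throughout the family the $PU(2)$-action stays free, equivalently that the quotient stays a manifold, which is precisely the inequality $\sum_i\pm r_i\notin 2\pi\ZZ$, and that the conventions relating half-traces, rotation angles and linkage arm lengths are matched with those of \cite{KK1}. Verifying that the identification $R_\delta(S^2,2n+1)\cong f_\delta^{-1}(0)/PU(2)$ and the submersivity of $f_\delta$ carry over word for word from the proof of Proposition~\ref{prop1.1} is routine.
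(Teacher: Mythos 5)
Your proposal follows the paper's argument exactly: emptiness of the abelian locus gives smoothness via Proposition \ref{prop1.1}, stability of the submersivity of $f$ lets one perturb the traceless condition to generic half-traces $\delta_i=\cos r_i$ with $r_i$ near $\tfrac\pi2$, and the resulting space is the configuration space of linkages to which Theorem 3.1 of \cite{KK1} applies. The only difference is that you supply more detail (freeness of the perturbed action, the Ehresmann step) than the paper, which records this argument in the paragraph preceding the proposition; note also that the stated dimension should read $2k-6=4n-4$ for $k=2n+1$, as your appeal to part (iv) of Proposition \ref{prop1.1} correctly gives.
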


\bigskip

 For $k$ even, $R(S^2,2n)$ has a   $4n-6$ dimensional smooth locus and a finite singular locus $R(S^2,2n)_{\rm ab}$ consisting of $2^{2n-2}$ points (see e.g. \cite{HK}).

\begin{thm}\label{thm1.2} Each point in $R(S^2,2n)_{\rm ab}$ has a neighborhood in $R(S^2,2n)$ homeomorphic to a cone on $(S^{2n-3}\times S^{2n-3})/S^1$, for some free action of  $S^1$ on $S^{2n-3}\times S^{2n-3}$. \end{thm}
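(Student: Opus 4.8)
Here is a plan for proving Theorem~\ref{thm1.2}.

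\medskip

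The plan is to reduce the statement, via Proposition~\ref{prop1.1} and the slice theorem for the $PU(2)$-action on $(S^2_{\bbi})^{2n-1}$, to an explicit computation of the Hessian of $f$ at an abelian point. Fix a point of $R(S^2,2n)_{\rm ab}$, represented by $\mathbf q_0=(\ep_1\bbi,\dots,\ep_{2n-1}\bbi)\in A$ with $\ep_i\in\{\pm1\}$. Its stabilizer under the conjugation action of $PU(2)$ is the circle $H$ centralizing $\bbi$, so its orbit is a $2$-sphere, and by the slice theorem a $PU(2)$-invariant neighborhood of this orbit is equivariantly diffeomorphic to $PU(2)\times_H\Si$, where $\Si$ is an $H$-invariant slice through $\mathbf q_0$ which we take tangent at $\mathbf q_0$ to the orthogonal complement of the orbit direction for the $PU(2)$-invariant product metric. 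Since $f$ is $PU(2)$-invariant, Proposition~\ref{prop1.1} identifies a neighborhood of our point in $R(S^2,2n)$ with $(f|_\Si)^{-1}(0)/H$. Identifying each $T_{\ep_i\bbi}S^2_{\bbi}=\operatorname{span}(\bbj,\bbk)$ with $\CC$ so that conjugation by $H$ becomes multiplication by a unit complex number, we obtain $T_{\mathbf q_0}(S^2_{\bbi})^{2n-1}\cong\CC^{2n-1}$ with $H\cong S^1$ acting diagonally by scalars; the orbit direction is the complex line spanned by $(\ep_1,\dots,\ep_{2n-1})$, and $T_{\mathbf q_0}\Si\cong\CC^{2n-2}$ is its orthogonal complement, on which $H$ still acts by scalars.

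\medskip

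Next I would compute $\Hess_{\mathbf q_0}f$. Since $2n-1$ is odd, $f(\mathbf q_0)=\Real(\pm\bbi)=0$; and, using $v\bbi=-\bbi v$ for $v\in\operatorname{span}(\bbj,\bbk)$, the first variation of $f$ at $\mathbf q_0$ along any factor is a real multiple of $\Real(v_i)=0$, so $\mathbf q_0$ is a critical point of $f$. Differentiating once more along geodesics of $S^2_{\bbi}$, the diagonal terms are multiples of $\Real(\pm\bbi)=0$ and the cross terms give
$$\Hess_{\mathbf q_0}f\big((v_i)_i\big)=2\sum_{k<l}\si_{kl}\,(b_lc_k-b_kc_l),\qquad v_i=b_i\bbj+c_i\bbk,$$
for explicit signs $\si_{kl}=\pm1$ depending on $k,l$ and the $\ep_i$. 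Absorbing the $\si_{kl}$ by rescaling each coordinate by $\pm1$, and then passing to the partial-sum coordinates $\tilde Z_l=\tilde z_1+\dots+\tilde z_l$ (where $\tilde z_i=\tilde b_i+\tilde c_i\sqrt{-1}$ are the rescaled coordinates), this quadratic form becomes, up to a nonzero constant,
$$\sum_{l=2}^{2n-1}\operatorname{Im}\!\big(\tilde Z_l\,\overline{\tilde Z_{l-1}}\big),$$
whose associated Hermitian matrix on $\CC^{2n-1}$ is tridiagonal with $\sqrt{-1}$ above and $-\sqrt{-1}$ below the diagonal. That matrix is unitarily conjugate, by a diagonal unitary, to a multiple of the adjacency matrix of the path on $2n-1$ vertices, so its eigenvalues are proportional to $\cos\frac{j\pi}{2n}$, $j=1,\dots,2n-1$; hence the form is non-degenerate of complex signature $(n-1,n-1)$ apart from a one-dimensional kernel coming from $j=n$. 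By $PU(2)$-invariance of $f$ this kernel contains the orbit direction, and since it is one-dimensional over $\CC$ it equals the orbit direction. Therefore the restriction of $\Hess_{\mathbf q_0}f$ to the slice $T_{\mathbf q_0}\Si$, being a complement to the kernel, is a non-degenerate complex Hermitian form of signature $(n-1,n-1)$, equivalently a non-degenerate real quadratic form of signature $(2n-2,2n-2)$ on $\RR^{4n-4}$.

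\medskip

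Finally, $\mathbf q_0$ is a non-degenerate critical point of $f|_\Si$ fixed by the compact group $H$, so the equivariant Morse lemma yields $H$-equivariant coordinates on $\Si$ near $\mathbf q_0$ in which $f|_\Si$ equals its Hessian; a further complex-linear (hence $H$-equivariant, since $H$ acts by scalars) change of coordinates brings this to $Q(z)=|z_+|^2-|z_-|^2$ on $\CC^{2n-2}=\CC^{n-1}\times\CC^{n-1}$, $z=(z_+,z_-)$. Then $(f|_\Si)^{-1}(0)$ near $\mathbf q_0$ is $Q^{-1}(0)$, the cone on $Q^{-1}(0)\cap S^{4n-5}=\{|z_+|=|z_-|=1/\sqrt2\}\cong S^{2n-3}\times S^{2n-3}$; the $S^1=H$ action by scalars is free on $\CC^{2n-2}\setminus\{0\}$ and commutes with the cone structure, so a neighborhood of our point in $R(S^2,2n)$ is the cone on $(S^{2n-3}\times S^{2n-3})/S^1$ for this free $S^1$-action, as claimed.

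\medskip

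The main obstacle is the Hessian computation of the second paragraph: carrying out the quaternionic second-variation calculation cleanly, and—the essential point—recognizing that after the sign-rescaling and the passage to partial-sum coordinates the Hessian becomes the tridiagonal Hermitian form, so that its signature $(n-1,n-1)$ and one-dimensional kernel can be read off from the classical path-graph spectrum and the kernel identified with the orbit direction. The slice theorem, the equivariant Morse lemma, and the cone picture are all routine.
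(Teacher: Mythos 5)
Your proposal is correct, and its overall skeleton coincides with the paper's: produce a finite-dimensional local model at an abelian point, show that point is a critical point of the defining function whose Hessian is nondegenerate transverse to the orbit direction with balanced signature, and apply the Morse lemma to exhibit the zero set as a cone on $S^{2n-3}\times S^{2n-3}$ carrying a residual free circle action. The two technical steps are, however, carried out differently. For the local model the paper does not invoke the slice theorem: it conjugates so that $q_1=\bbi$ and replaces $f$ by $g(q_2,\dots,q_{2n-1})=\Real(\bbi q_2\cdots q_{2n-1})$ on $(S^2_\bbi)^{2n-2}$, with a residual circle action fixing the abelian point; this removes the orbit direction at the outset, so the Hessian of $g$ is honestly nondegenerate and there is no radical to identify. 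More substantially, the nondegeneracy and signature are obtained by a different piece of linear algebra: ordering the real coordinates as $(x_1,\dots,x_{2n-2},y_1,\dots,y_{2n-2})$ puts the Hessian in the block form $\left(\begin{smallmatrix}0&A\\ A^{T}&0\end{smallmatrix}\right)$, which forces signature $0$ once $A$ is invertible, and invertibility is proved by reducing mod $2$ (the reduction $B$ satisfies $B^2=I$ over $\FF_2$, so $\det A$ is odd). Your route --- absorbing the signs $(-1)^{k+l}=(-1)^k(-1)^l$ by rescaling, passing to partial sums to reach the tridiagonal Hermitian form, and reading off the path-graph spectrum $\cos(j\pi/2n)$ --- is longer but yields more: explicit eigenvalues, the identification of the one-complex-dimensional radical with the orbit direction, and the complex (not merely real) signature $(n-1,n-1)$; you are also more careful than the paper about equivariance of the Morse lemma. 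Both arguments are valid, and your sign-factorization claim does hold since the centrality of the $\ep_i$ makes their contribution a single overall sign.
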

Theorem \ref{thm1.2} can be derived from Theorem A.7 in the unpublished article \cite{JR} which considers the representation variety rather than the character variety.
For completeness and because we need the details of the construction below, we give a proof of Theorem \ref{thm1.2} in Appendix \ref{apB}.  Our approach is similar to  the argument of \cite{JR}.  Our proof is shorter, as we use some tricks to streamline the verification of the non-degeneracy of the Hessian and to calculate its signature.

\medskip

\begin{df}\label{bdloc}
Define the {\em binary dihedral locus} 
   $R(S^2,k)_{\rm B}\subset R(S^2,k)$ to
 consist of conjugacy classes representations conjugate to representations with image in the binary dihedral  subgroup 
$$B=\{e^{\theta\bbk}\}\cup \{e^{\theta\bbk}\bbi\}\subset SU(2)$$
and which, in addition, take each $x_\ell$ into the coset $\{e^{\theta\bbi}\bbj\}$. Since $\Real(e^{\theta\bbk}\bbi)=0$, $R(S^2,k)_{\rm B}\subset R(S^2,k)$.
\end{df}

We emphasize that, with this definition, Image$(\rho)\subset B$ does not imply that $\rho\in R(S^2,k)_B$. For example, the representation $\rho\in R(S^2,3)$ taking $x_1$ to $\bbi$, $x_2$ to $\bbj$ and $x_3$ to $-\bbk$ has image in $B$ but does not lie in $R(S^2,3)_B$ since $-\bbk\ne e^{\theta\bbk}\bbi$ for any $\theta$.

  When $k$ is odd, $R(S^2,k)_{B}$ is empty. This is because product of an odd number of elements of the index 2 coset $\{e^{\theta\bbk}\bbi\}$  lies again in this coset and hence cannot equal the identity.
Note that  $$R(S^2,2n)_{\rm ab}\subset R(S^2,2n)_{B}.$$  

\medskip
Let $\TT^\ell$ denote the $\ell$-torus, $\TT^\ell=(S^1)^\ell$.

\begin{prop}\label{prop2.6} The binary dihedral subspace $R(S^2,2n)_{B}$    homeomorphic to $\TT^{2n-2}/(\ZZ/2)$, where $\ZZ/2$ acts on the $(2n-2)$-torus  by $(e^{\alpha_1\bbi},\dots, e^{\alpha_{2n-2}\bbi})\mapsto (e^{-\alpha_1\bbi},\dots, e^{-\alpha_{2n-2}\bbi}) $.  The homeomorphism identifies the fixed points of the $\ZZ/2$ action with the finite abelian subspace $R(S^2,2n)_{\rm ab}\subset R(S^2,2n)_{B}$. Hence a neighborhood of $\rho\in R(S^2,2n)_{\rm ab}$ in $R(S^2, 2n)_B$ is homeomorphic to a cone on $\RR P^{2n-3}$.
\end{prop}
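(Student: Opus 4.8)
The plan is to realize $R(S^2,2n)_{B}$ as an explicit quotient of a $(2n-1)$-torus. By Definition~\ref{bdloc}, every class in $R(S^2,2n)_{B}$ is represented by a homomorphism $\rho_{\boldsymbol\alpha}$ with $\rho_{\boldsymbol\alpha}(x_\ell)=e^{\alpha_\ell\bbk}\bbi$ for some $\boldsymbol\alpha=(\alpha_1,\dots,\alpha_{2n})\in(\RR/2\pi\ZZ)^{2n}$. Using $\bbi\,e^{\beta\bbk}\,\bbi^{-1}=e^{-\beta\bbk}$ one computes $\rho_{\boldsymbol\alpha}(x_i)\rho_{\boldsymbol\alpha}(x_j)=-e^{(\alpha_i-\alpha_j)\bbk}$ and, inductively, $\rho_{\boldsymbol\alpha}(x_1)\cdots\rho_{\boldsymbol\alpha}(x_{2n})=(-1)^{n}e^{(\alpha_1-\alpha_2+\alpha_3-\cdots-\alpha_{2n})\bbk}$. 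Since $(-1)^{n}=e^{n\pi\bbk}$, the relation $x_1\cdots x_{2n}=1$ holds exactly when $\boldsymbol\alpha$ lies in the subset $\TT'\subset(\RR/2\pi\ZZ)^{2n}$ cut out by the single linear congruence $\alpha_1-\alpha_2+\alpha_3-\cdots-\alpha_{2n}\equiv n\pi \pmod{2\pi}$. As $\TT'$ is a coset of a subtorus it is a $(2n-1)$-torus, and $\boldsymbol\alpha\mapsto[\rho_{\boldsymbol\alpha}]$ is a continuous surjection $\TT'\to R(S^2,2n)_{B}$.

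Next I would identify its fibers. Writing $B=T\cup T\bbi$ with $T=\{e^{\theta\bbk}\}$, the conjugation action of $B$ on representations factors through $B/\{\pm1\}$, and a short computation shows it acts on $\TT'$ by the simultaneous translations $\alpha_\ell\mapsto\alpha_\ell+2\phi$ (from $T$) together with the involution $\iota\colon\alpha_\ell\mapsto-\alpha_\ell$ (from $\bbi$); both preserve $\TT'$. The crucial claim is that the $SU(2)$-conjugacy class of $\rho_{\boldsymbol\alpha}$ already recovers $\boldsymbol\alpha$ up to this action. To see this I would use the conjugation invariants $\Real\bigl(\rho_{\boldsymbol\alpha}(x_ix_j)\bigr)=-\cos(\alpha_i-\alpha_j)$: if $\rho_{\boldsymbol\alpha}$ and $\rho_{\boldsymbol\alpha'}$ are conjugate, then after translating so that $\alpha_1=\alpha_1'=0$ one obtains $\cos\alpha_\ell=\cos\alpha_\ell'$ for all $\ell$ and $\sin\alpha_i\sin\alpha_j=\sin\alpha_i'\sin\alpha_j'$ for all $i,j$, and a short case distinction (either all $\sin\alpha_\ell=0$, or some $\sin\alpha_m\ne0$) forces $\alpha_\ell'=\pm\alpha_\ell$ with one sign independent of $\ell$, i.e. $\boldsymbol\alpha'$ lies in the $(B/\{\pm1\})$-orbit of $\boldsymbol\alpha$. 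Hence the map descends to a continuous bijection $\TT'/(B/\{\pm1\})\to R(S^2,2n)_{B}$; since the source is compact and $R(S^2,2n)_{B}\subset R(S^2,2n)\cong f^{-1}(0)/PU(2)$ is Hausdorff, this bijection is a homeomorphism.

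It then remains to recognize $\TT'/(B/\{\pm1\})$. The circle action $\alpha_\ell\mapsto\alpha_\ell+2\phi$ is free on $\TT'$, and the differences $t_\ell=\alpha_{\ell+1}-\alpha_\ell$ for $\ell=1,\dots,2n-2$ furnish coordinates on the quotient torus $\TT'/S^1\cong\TT^{2n-2}$ (the remaining difference being determined by the congruence defining $\TT'$). In these coordinates the residual $\ZZ/2$ generated by $\iota$ acts by $t_\ell\mapsto-t_\ell$, which is precisely the coordinatewise complex conjugation of the $(2n-2)$-torus in the statement. Its fixed set is $\{\,t_\ell\in\{0,\pi\}\,\}$, a set of $2^{2n-2}$ points; unwinding the identifications, $[\rho_{\boldsymbol\alpha}]$ is fixed iff every $\alpha_\ell-\alpha_1\in\{0,\pi\}$, iff the elements $\rho_{\boldsymbol\alpha}(x_\ell)\in\{\pm e^{\alpha_1\bbk}\bbi\}$ pairwise commute, iff $\rho_{\boldsymbol\alpha}$ is abelian, so the fixed set corresponds to $R(S^2,2n)_{\rm ab}$ (consistently with its cardinality $2^{2n-2}$). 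Finally, near a fixed point an angle chart turns coordinatewise conjugation into the antipodal involution $x\mapsto-x$ of $\RR^{2n-2}$, and $\RR^{2n-2}/\{\pm1\}$ is the cone on $S^{2n-3}/\{\pm1\}=\RR P^{2n-3}$ (for $n\ge2$; for $n=1$, $R(S^2,2)$ is a single point), giving the last assertion.

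The main obstacle is the middle step: proving that the conjugation relation on $\TT'$ is no finer than the $B/\{\pm1\}$-action. One cannot just invoke $N(B)=B$, because a conjugating element need only carry one subgroup of $B$ into another, and for the degenerate classes (many $\alpha_\ell\in\{0,\pi\}$) the image of $\rho_{\boldsymbol\alpha}$ is a small finite subgroup; it is the trace-of-product invariants $\Real(\rho(x_ix_j))$ that make the argument uniform, and checking that they pin down $\boldsymbol\alpha$ up to translation and a single global sign---including those degenerate cases---is the one place requiring genuine care.
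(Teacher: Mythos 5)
Your proof is correct and follows essentially the same route as the paper's: parametrize binary dihedral representations by the angles $\alpha_\ell$ and quotient by the residual conjugation action of $B$; the paper simply fixes the gauge $\rho(x_1)=\bbi$ up front so that $\TT^{2n-2}$ appears immediately and leaves the injectivity of $\TT^{2n-2}/(\ZZ/2)\to R(S^2,2n)_B$ as ``easy to see.'' Your verification of that injectivity via the conjugation invariants $\Real(\rho(x_ix_j))=-\cos(\alpha_i-\alpha_j)$, including the degenerate cases, is a sound way to supply the step the paper omits.
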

\begin{proof} Since $e^{-(\theta/2)\bbi}e^{\theta\bbk}\bbi e^{(\theta/2)\bbi}=\bbi$, any binary dihedral representation $\rho$ can be conjugated 
so that $\rho(x_1)=\bbi$ and $\rho(x_\ell)=e^{\theta_\ell\bbk}\bbi$ for $\ell>1$. The relation $x_1\dots x_{2n}=1$ implies that 
$$(-1)^n e^{(-\theta_2+\theta_3+\dots-\theta_{2n})\bbk}=1,$$
so that $\rho(x_{2n})=e^{(n\pi-\theta_2+\theta_3-\dots+\theta_{2n-1})\bbk}$. Taking $\theta_i=\alpha_{i+1}$ for $i=2,\dots,n-1$ and  $\theta_{2n}= n\pi-\theta_2+\theta_3-\dots+\theta_{2n-1}$ therefore gives a surjection $\TT^{2n-2}\to R(S^2,2n)_{B}$.    Conjugation by $\bbi$ preserves $B$ and takes $e^{\theta\bbk}\bbi$ to $e^{-\theta\bbk}\bbi$. It follows that this surjection factors through the $\ZZ/2$ action, and it is easy to see that  the resulting map $\TT^{2n-2}/(\ZZ/2)\to R(S^2,2n)_B$ is injective, and hence a homeomorphism.

The fixed points of the $\ZZ/2$ action on $\TT^{2n-2}$ are   the $2^{2n-2}$ points $(\pm 1,\dots,\pm 1)$. These correspond via $\TT^{2n-2}\to R(S^2, 2n)_B$ precisely to the abelian representations $\rho(x_i)=\pm \bbi$. 
\end{proof}

\section{2-fold covers}

Consider the homomorphism 
\begin{equation}\label{defofalpha}\alpha:\pi_1(S^2\setminus\{a_i\}_{i=1}^{k})\to \{\pm 1\}, ~ \alpha(x_i)=-1.
\end{equation}
Note that this only makes sense when $k$ is even, so we asume that $k=2n$.  Then $\alpha$ defines a 2-fold cyclic branched cover $p:F\to S^2$  branched over $\{a_i\}_{i=1}^{2n}$. The Riemann-Hurwitz formula identifies $F$ as an orientable closed surface of genus $n-1$, and so we denote it by $F_{n-1}$. Denote the preimage $p^{-1}(a_i)$ by $\ta_i$.

Identify $\pi_1(F_{n-1}\setminus \{\ta_i\})$ with $\ker\alpha$  via the induced injection $$p_*:\pi_1(F_{n-1}\setminus \{\ta_i\})\to \pi_1(S^2\setminus \{a_i\}).$$  Notice that $x_i^2\in \pi_1(F_{n-1}\setminus \{\ta_i\})$ and in fact represents  the meridian  of $\ta_i$ in $F_{n-1}\setminus \{\ta_i\}$.

\medskip

 Recall that  $R(F_{n-1})$ denotes the $SU(2)$ character variety of the closed surface $F_{n-1}$.
 Denote by $R(F_{n-1})_{\rm ab}\subset R(F_{n-1})$ the subvariety of representations with abelian image.
 
\begin{df} \label{othertrace}
 Let $R(F_{n-1}, 2n)_{-1}$ and $R(F_{n-1}, 2n)_{1}$ be defined as 
$$R(F_{n-1}, 2n)_{\pm 1}=\{\rho\in \Hom(\pi_1(F_{n-1}\setminus \{\ta_i\}),SU(2)) ~|~ 
 \rho( x_i^2) =\pm 1\}/_{\rm conjugation}.$$
 \end{df}

 \medskip

Restricting to $\ker \alpha$  induces a well defined function $$p^*:R(S^2,2n)\to R(F_{n-1},2n)_{-1}.$$ This is because if $z\in SU(2)$ satisisfies $\Real(z)=0$, then $z^2=-1$, hence $p^*(\rho)(x_i^2)=-1$.

The inclusion $i:F_{n-1}\setminus \{\ta_i\}\subset F_{n-1}$ induces a function 
$$i^*: R(F_{n-1})\to R(F_{n-1},2n)_1$$
since   the meridians of $F_{n-1}\setminus \{\ta_i\}$ are nullhomotopic in $F_{n-1}$.
 
Let $\rho_0\in R(S^2,2n)_{\rm ab}$ denote the (conjugacy class of  the) representation sending every $x_i$ to $\bbi$. Then define
$c\in R(F_{n-1}, 2n)_{-1}$ by $c=p^*(\rho_0)$. Since $\rho_0$ sends every loop in $\pi_1(S^2\setminus\{a_i\})$ to $\{\pm 1,\pm\bbi\}$ it follows that $c$ takes values in the center $\{\pm 1\}\subset SU(2)$, and takes the value $-1$ on each meridian of $F_{n-1}\setminus \{\ta_i\}$.

\begin{prop}\label{prop2.1}\hfill
\begin{enumerate}
\item Pointwise multiplication by $c$ induces a homeomorphism 
$$c_*:R(F_{n-1}, 2n)_{-1}\to R(F_{n-1},2n)_1.$$  
\item The restriction $i^*: R(F_{n-1})\to R(F_{n-1},2n)_1$ is a homeomorphism.
\end{enumerate} 
\end{prop}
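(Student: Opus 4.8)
The plan is to prove both statements first at the level of representation spaces $\Hom(-,SU(2))$, which carry continuous $SU(2)$-actions by conjugation, and then to pass to quotients, using that an $SU(2)$-equivariant homeomorphism of $SU(2)$-spaces descends to a homeomorphism of orbit spaces.

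For part (1), I would define
\[\hat c\colon \Hom\bigl(\pi_1(F_{n-1}\setminus\{\ta_i\}),SU(2)\bigr)\longrightarrow \Hom\bigl(\pi_1(F_{n-1}\setminus\{\ta_i\}),SU(2)\bigr),\qquad \hat c(\rho)(\gamma)=c(\gamma)\,\rho(\gamma),\]
and then verify the following, all of which are immediate once one recalls that $c$ is valued in the center $\{\pm1\}$: that $\hat c(\rho)$ is again a homomorphism; that $\hat c$ is continuous and $SU(2)$-equivariant; that $\hat c(\rho)(x_i^2)=-\rho(x_i^2)$, since $c(x_i^2)=-1$, so that $\hat c$ interchanges the constrained spaces $\{\rho:\rho(x_i^2)=-1\ \forall i\}$ and $\{\rho:\rho(x_i^2)=1\ \forall i\}$; and that $\hat c\circ\hat c=\mathrm{id}$, since $c(\gamma)^2=1$ for every $\gamma$. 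Then $\hat c$ restricts to mutually inverse equivariant homeomorphisms between these two representation spaces, and these descend to the asserted homeomorphism $c_*$.

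For part (2), let $i_*\colon \pi_1(F_{n-1}\setminus\{\ta_i\})\to\pi_1(F_{n-1})$ denote the map induced by the inclusion. The algebraic input is that $i_*$ is surjective with kernel the normal closure of the meridians, which as observed above are the elements $x_1^2,\dots,x_{2n}^2$; this is van Kampen's theorem applied to filling in the $2n$ punctures one disk at a time. Consequently, precomposition with $i_*$ is a continuous $SU(2)$-equivariant injection of $\Hom(\pi_1(F_{n-1}),SU(2))$ into $\Hom(\pi_1(F_{n-1}\setminus\{\ta_i\}),SU(2))$ whose image is exactly $\{\rho:\rho(x_i^2)=1\ \forall i\}$: if $\rho$ satisfies these conditions, then $\ker\rho$ is normal and contains each $x_i^2$, hence contains $\ker i_*$, so $\rho$ factors through $\pi_1(F_{n-1})$. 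To see that the inverse bijection is continuous, I would fix generators $g_1,\dots,g_m$ of $\pi_1(F_{n-1})$ and lifts $\tilde g_j\in\pi_1(F_{n-1}\setminus\{\ta_i\})$ with $i_*(\tilde g_j)=g_j$; then the inverse sends $\rho$ to the homomorphism $g_j\mapsto\rho(\tilde g_j)$, which is visibly continuous in $\rho$. Thus precomposition with $i_*$ is an $SU(2)$-equivariant homeomorphism onto $\{\rho:\rho(x_i^2)=1\ \forall i\}$, and it descends to the homeomorphism $i^*\colon R(F_{n-1})\to R(F_{n-1},2n)_1$.

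I do not expect a genuine obstacle here. The one point that deserves care is the identification of $\ker i_*$ with the normal closure of $\{x_i^2\}$ — together with the already-recorded fact that $x_i^2$ is the meridian of $\ta_i$ — and the small verifications, using finite generation of the groups involved, that the inverse maps in both parts are continuous.
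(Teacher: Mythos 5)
Your proof is correct and follows the same route as the paper's: part (1) is pointwise multiplication by the central representation $c$, an involution since $c^2=1$, and part (2) is the Seifert--Van Kampen identification of $\pi_1(F_{n-1})$ as the quotient of $\pi_1(F_{n-1}\setminus\{\ta_i\})$ by the normal closure of the meridians $x_i^2$. You have merely filled in the routine equivariance and continuity checks that the paper leaves implicit.
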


\begin{proof}
 The pointwise product of a representation with a central representation is again a representation.   Thus $c$ induces maps 
 $R(F_{n-1}, 2n)_{-1}\to R(F_{n-1},2n)_1$ and $R(F_{n-1}, 2n)_{1}\to R(F_{n-1},2n)_{-1}$; these maps are inverses of each other since $c^2=1$. The second statement follows from the Seifert-Van kampen theorem.  
   \end{proof}

Denote by $\tp$ the composite
\begin{equation}
\label{eq3}\tp=i^*c_*p^*:R(S^2,2n)\to  R(F_{n-1})
\end{equation}

Pointwise multiplication by $\alpha$, defined in Equation (\ref{defofalpha}), determines a homeomorphism of order 2: \begin{equation}
\label{eq4} \alpha_*:R(S^2,2n)\to R(S^2,2n).
\end{equation}

Some of the statements of the following theorem are known to experts, for example see  \cite{FKP, PS}.  We include its   proof  in Appendix \ref{apC} for convenience.

\begin{thm}\label{prop2.2}
 \hfill
\begin{enumerate}
\item The fixed point set of  the involution $\alpha_*$ is $R(S^2,2n)_B$.
\item $\tp:R(S^2,2n)\to  R(F_{n-1})$ induces an injection  
$R(S^2,2n)/\langle \alpha_*\rangle\subset  R(F_{n-1})$.
\item The image of $\tp:R(S^2,2n)_B\to  R(F_{n-1})$ is precisely $ R(F_{n-1})_{\rm ab}$.
\end{enumerate} 
\end{thm}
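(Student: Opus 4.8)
The plan is to handle the three assertions in turn, using throughout that, by Proposition~\ref{prop2.1}, $c_*$ and $i^*$ are homeomorphisms, so $\tp$ has the same point-preimages as the restriction map $p^*\colon R(S^2,2n)\to R(F_{n-1},2n)_{-1}$, $[\rho]\mapsto[\rho|_{\ker\alpha}]$, and that $\tp$ carries classes with abelian image to classes with abelian image and conversely (each of its three factors does). For the first assertion: $[\rho]$ is fixed by $\alpha_*$ iff there is $g\in SU(2)$ with $g\rho(x_i)g^{-1}=-\rho(x_i)$ for all $i$; since each $\rho(x_i)\in S^2_\bbi$, on which conjugation acts as a rotation, this says that one rotation of $S^2_\bbi$ carries every $\rho(x_i)$ to its antipode. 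The only rotations of $S^2$ carrying some point to its antipode are the half-turns, and a half-turn about an axis $N$ carries to their antipodes exactly the great circle $N^\perp\cap S^2$; so such a $g$ exists iff all $\rho(x_i)$ lie on a common great circle of $S^2_\bbi$, i.e.\ in a common $2$-plane of $su(2)$. Conjugating that plane to $\operatorname{span}(\bbi,\bbj)$ and using $S^2_\bbi\cap\operatorname{span}(\bbi,\bbj)=\{e^{\theta\bbk}\bbi\}$ identifies this with the condition $[\rho]\in R(S^2,2n)_B$ of Definition~\ref{bdloc}; conversely, if $\rho(x_i)=e^{\theta_i\bbk}\bbi$ then conjugation by $\bbk$---which commutes with every $e^{\theta\bbk}$ and negates $\bbi$---sends each $\rho(x_i)$ to $-\rho(x_i)$.

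For the second assertion, note first that $\alpha$ is trivial on $\ker\alpha$, so $p^*\alpha_*=p^*$ and $p^*$ (hence $\tp$) factors through $R(S^2,2n)/\langle\alpha_*\rangle$; injectivity is the point. Assume $\rho_1|_{\ker\alpha}$ and $\rho_2|_{\ker\alpha}$ are conjugate and, after conjugating $\rho_2$, equal, say to $\bar\rho$. Since $\pi_1(S^2\setminus\{a_i\})=\ker\alpha\sqcup\ker\alpha\,x_1$, each $\rho_i$ is determined by $\bar\rho$ and $z_i:=\rho_i(x_1)$, and $w:=z_2^{-1}z_1$ centralizes $\bar\rho(\ker\alpha)$ and satisfies $z_2wz_2^{-1}=w^{-1}$ (as $z_1^2=z_2^2=\bar\rho(x_1^2)$). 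If $\bar\rho(\ker\alpha)$ is nonabelian its centralizer in $SU(2)$ is $\{\pm1\}$, so $w=\pm1$, giving $\rho_1=\rho_2$ or $\rho_1=\alpha_*\rho_2$. If $\bar\rho(\ker\alpha)$ is abelian then either both $\rho_i$ have abelian image---in which case $\bar\rho$ is scalar-valued, $\rho_i(x_j)=\varepsilon_j^{(i)}Q_i$ for an axis $Q_i\in S^2_\bbi$ and signs $\varepsilon_j^{(i)}$, and after conjugating $\rho_2$ so that $Q_2=Q_1$ (which does not change $\bar\rho$) the equality on $\ker\alpha$ forces $\varepsilon_j^{(1)}=\delta\varepsilon_j^{(2)}$ for a fixed sign $\delta$, whence again $\rho_1=\rho_2$ or $\rho_1=\alpha_*\rho_2$---or $\bar\rho(\ker\alpha)$ spans a maximal torus $T$, with centralizer $T$, and some $z_i\notin T$; then $w\in T$ forces both $z_i$ into the nontrivial coset of $T$ in its normalizer, so conjugation by $z_2$ inverts $T$ and conjugating $\rho_1$ by a suitable $h\in T$, obtained by solving $h^2=w$ in $T$, replaces $z_1$ by $z_2$ while fixing $\bar\rho$. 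In every case $[\rho_1]$ is $[\rho_2]$ or $\alpha_*[\rho_2]$.

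For the third assertion: if $\rho\in R(S^2,2n)_B$, conjugate so that $\rho(x_i)=e^{\theta_i\bbk}\bbi$; then any $g\in\ker\alpha$, being a product of pairs $x_i^{\pm1}x_j^{\pm1}$, is mapped by $\rho$ into the torus $\{e^{t\bbk}\}$, so $p^*(\rho)$, and hence $\tp(\rho)$, has abelian image, whence $\tp(R(S^2,2n)_B)\subseteq R(F_{n-1})_{\rm ab}$. For the reverse inclusion, given $[\sigma]\in R(F_{n-1})_{\rm ab}$, conjugate so $\sigma$ takes values in $\{e^{t\bbk}\}$ and construct $\rho\in R(S^2,2n)_B$ with $\rho(x_i)=e^{\phi_i\bbk}\bbi$, choosing the angles $\phi_i$ so that each difference $\phi_i-\phi_{i+1}$ realizes the $S^1$-holonomy of $\sigma$ along the loop $x_ix_{i+1}$, corrected by the (explicit, $\pm1$) value of $c$ there; the single defining relation of $\pi_1(F_{n-1})$ becomes, under $p_*$, precisely the constraint $\sum_{i=1}^{2n}(-1)^{i+1}\phi_i\equiv n\pi\pmod{2\pi}$ that cuts out $R(S^2,2n)_B$ in the torus (Proposition~\ref{prop2.6}), so such angles exist, and the resulting $\rho$ satisfies $\tp[\rho]=[\sigma]$. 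Equivalently, under the torus coordinates of Proposition~\ref{prop2.6} and the standard identification $R(F_{n-1})_{\rm ab}\cong\TT^{2n-2}/(\ZZ/2)$, $\tp|_{R(S^2,2n)_B}$ is, up to a translation, the surjective ``difference'' homomorphism on the angle coordinates, hence onto.

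The genuine work, I expect, lies in the reverse inclusion of the third assertion---identifying which classes of $H_1(F_{n-1};\ZZ)$ the loops $x_ix_{i+1}$ carry, and checking that the surface relation together with the sign twist by $c$ is exactly the constraint on the $\phi_i$---and, secondarily, in the abelian subcase of the second, with its elementary but fussy normalizer-of-a-torus bookkeeping. The remaining steps (the factorization through $\langle\alpha_*\rangle$, the rotation picture of the first assertion, and preservation of abelian image in the third) are routine.
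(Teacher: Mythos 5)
Your proof is correct, and in parts (ii) and (iii) it genuinely diverges from the paper's. Part (i) is the paper's argument in geometric clothing: the paper writes the fixed-point condition as $[g,\rho(x_i)]=-1$, deduces $\Real(g)=\Real(\rho(x_i))=\Real(g\rho(x_i))=0$, and normalizes $g=\bbk$, which is exactly your half-turn/great-circle picture. In part (ii) the paper splits on whether $\rho_0(x_1)=\pm\rho_1(x_1)$ --- in the first case the two representations differ by $\alpha$, and in the second a short trace computation forces both into the binary dihedral locus and conjugate by $e^{\theta\bbk/2}$ --- while you split on the centralizer type of $\bar\rho(\ker\alpha)$; both work, and your normalizer-of-a-torus argument (solving $h^2=w$ in $T$) is a clean substitute, though you should add the one-line observation that if some $z_i$ lay in $T$ then $\rho_i$, and hence $\bar\rho$, would be central (its generators being traceless elements of $T$), so that your case division really is exhaustive. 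The substantive difference is in part (iii): the paper's ``converse'' proves $\tp^{-1}(R(F_{n-1})_{\rm ab})\subseteq R(S^2,2n)_B$ (no class outside the binary dihedral locus maps to an abelian class), whereas you prove $R(F_{n-1})_{\rm ab}\subseteq\tp(R(S^2,2n)_B)$ by constructing explicit lifts. These are complementary facts --- Diagram (7) of the paper uses both --- and yours is the one the literal wording ``the image is precisely $R(F_{n-1})_{\rm ab}$'' calls for. Your lift construction is only sketched; to complete it one needs explicit words in the $x_j$ representing a basis of $H_1(F_{n-1})$ (as in the paper's proof of Theorem \ref{main}, e.g. $p_*(r_1)=x_1x_2$, $p_*(s_1)=x_3^{-1}x_2^{-1}$, etc.), after which the count --- $2n$ angles, one gauge normalization $\phi_1=0$, $2n-2$ prescribed holonomies, and the single relation of Proposition \ref{prop2.6} determining the remaining angle --- closes up exactly as you claim. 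So there is no gap, but if your write-up is meant to replace the paper's, you should also record the paper's inclusion $\tp^{-1}(R(F_{n-1})_{\rm ab})\subseteq R(S^2,2n)_B$, which is needed later.
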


\section{The traceless character variety $R(S^2,6)$}

  For a closed surface $F_g$ of genus $g$, it is known (see e.g. \cite{AB,goldman1}) that  
\begin{equation}\label{surfde}
R(F_g)=R(F_g)_{\rm cen}\sqcup R(F_g)^*_{\rm ab}\sqcup R(F_g)_{\rm irr}
\end{equation}
where $R(F_g)_{\rm cen}$ denotes the conjugacy classes of central representations (those with value in the center $\{\pm1\}\subset SU(2)$),  $R(F_g)^*_{\rm ab}=R(F_g)_{\rm ab}\setminus R(F_g)^*_{\rm cen}$ and $R(F_g)_{\rm irr}=R(F_g)\setminus R(F_g)_{\rm ab}$.  The central locus $R(F_g)_{\rm cen}$ is a finite set of $2^{2g}$ points, the abelian, non-central locus $R(F_g)^*_{\rm ab}$ is a smooth semi-algebraic set of dimension $ 2g$, and the irreducible locus $R(F_g)_{\rm irr}$ is smooth of dimension $6g-6$. 

Theorem \ref{prop2.2} can be summarized in the diagram 
  \begin{equation*}
\setcounter{equation}{7}
\label{diag1} \includegraphics{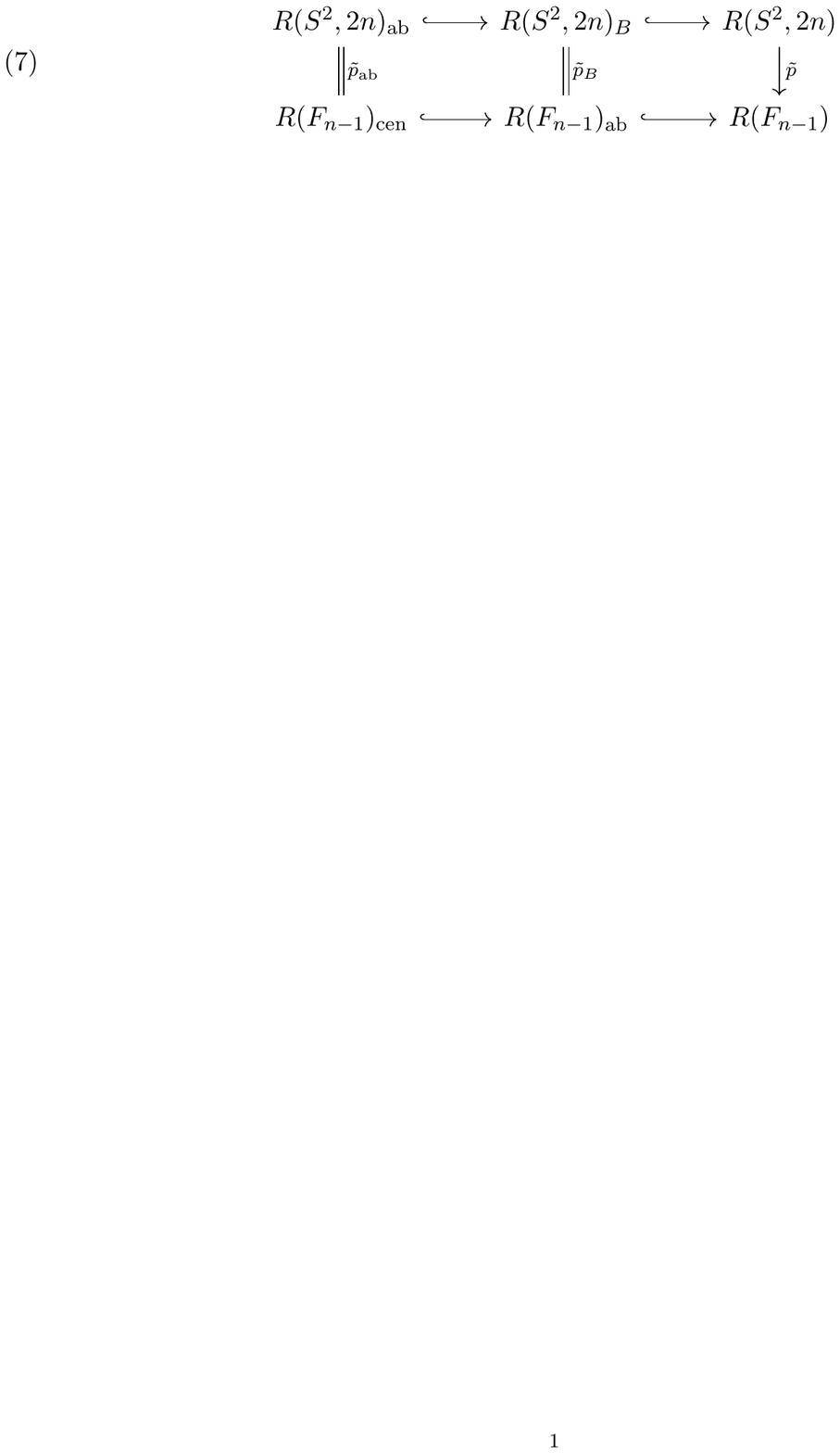}
\end{equation*}
with the restrictions $\tp_{\rm ab}, \tp_B$ homeomorphisms, and the right vertical $\tp$ the  quotient of the involution $\alpha_*$ onto its image.  
When $n=1$ all the spaces in the diagram contain only one point. When $n=2$ the upper and lower   right inclusions are   equalities and all vertical maps homeomorphisms. Indeed, it is known (see e.g. \cite{Heusener-Kroll,Lin} and \cite[Proposition 3.1]{HHK1}) that $ R(S^2,4)=R(S^2,4)_B$, so that $\alpha_*$ acts trivially on $ R(S^2,4)$. Moreover since a genus 1 surface has abelian fundamental group, $R(F_{n-1})_{\rm ab}=R(F_{n-1})$. As mentioned above,  $R(S^2,4)$ and $ R(F_1)$ are homeomorphic   to the {\em pillowcase}, a topological 2-sphere obtained as the quotient of the hyperelliptic involution on the torus.    

 When $n>2$,  the dimensions of the  top strata of the semi-algebraic sets on the top line of (7) are (using  Theorem  \ref{thm1.2} and Proposition \ref{prop2.6}) $0, 2n-2,
 4n-3$  and in the bottom line $0, 2n-2, 6n-12$.    
Thus for dimensional reasons, when $n>3$,  $\tp:R(S^2,2n)\to R(F_{n-1})$ cannot be surjective, and hence $\tp:R(S^2;2n)/\langle \alpha_*\rangle\subset  R(F_{n-1})$ cannot be a homeomorphism.

\medskip

Thus the interesting case is when $n=3$.  The   following theorem shows that $\tp:R(S^2,6)\to R(F_2)$ is a surjective, and hence $R(F_2)$ is the orbit space of $R(S^2,6)$ by the  involution $\alpha_*$ with fixed point set $R(S^2,6)_B$.

\begin{thm} \label{main}The map  $\tp: R(S^2, 6)\to  R(F_2)$ is surjective.
\end{thm}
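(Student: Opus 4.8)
The plan is to prove surjectivity by a soft topological argument: the image of $\tp$ is closed because $R(S^2,6)$ is compact, its intersection with the irreducible locus of $R(F_2)$ is open by invariance of domain, and that irreducible locus is connected. First I would extract from Theorem~\ref{prop2.2} the identity $\tp^{-1}\big(R(F_2)_{\rm ab}\big)=R(S^2,6)_B$: the inclusion $\supseteq$ is part~(3), and for $\subseteq$, if $\tp(\rho)\in R(F_2)_{\rm ab}$ then part~(3) produces $\rho'\in R(S^2,6)_B$ with $\tp(\rho')=\tp(\rho)$, so the injectivity of $R(S^2,6)/\langle\alpha_*\rangle\hookrightarrow R(F_2)$ (part~(2)) forces $\rho\in\{\rho',\alpha_*\rho'\}$, and since $R(S^2,6)_B$ is the fixed set of $\alpha_*$ (part~(1)) both cases give $\rho=\rho'$. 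Hence $\tp$ restricts to a map $\tp'\colon R(S^2,6)\setminus R(S^2,6)_B\to R(F_2)_{\rm irr}$, with $\im\tp'=\tp(R(S^2,6))\cap R(F_2)_{\rm irr}$.

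Next I would verify that $\tp'$ is a map between $6$-manifolds. On the source, Proposition~\ref{prop1.1} gives that $R(S^2,6)\setminus R(S^2,6)_{\rm ab}$ is smooth of dimension $4\cdot 3-6=6$, while Proposition~\ref{prop2.6} shows $R(S^2,6)_B\setminus R(S^2,6)_{\rm ab}$ is a closed subset of it of dimension $2\cdot 3-2=4$; therefore $R(S^2,6)\setminus R(S^2,6)_B$, being the complement of that closed subset, is an open subset of a $6$-manifold. On the target, $R(F_2)_{\rm irr}$ is smooth of dimension $6g-6=6$. Now $\im\tp'$ is open: for $\rho\in R(S^2,6)\setminus R(S^2,6)_B$ we have $\alpha_*\rho\neq\rho$, so by Hausdorffness there is an open $U\ni\rho$ with $\alpha_*U\cap U=\varnothing$; then $\tp|_U$ is injective by Theorem~\ref{prop2.2}(2) (if $\tp(x)=\tp(y)$ with $x,y\in U$ then $y\in\{x,\alpha_*x\}$, and $y=\alpha_*x$ is excluded by $\alpha_*U\cap U=\varnothing$), hence open by invariance of domain, so $\tp'$ is an open map.

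It remains to see that $\im\tp'$ is closed and nonempty in $R(F_2)_{\rm irr}$, and that $R(F_2)_{\rm irr}$ is connected. Closedness: $R(S^2,6)$ is compact --- e.g. it is $f^{-1}(0)/PU(2)$ by Proposition~\ref{prop1.1}(3) --- so $\tp(R(S^2,6))$ is closed in $R(F_2)$, and $\im\tp'=\tp(R(S^2,6))\cap R(F_2)_{\rm irr}$ is closed in $R(F_2)_{\rm irr}$. Nonemptiness: $R(S^2,6)\setminus R(S^2,6)_{\rm ab}$ is a nonempty $6$-manifold --- for instance it accumulates onto each of the $16$ abelian points by Theorem~\ref{thm1.2} --- whereas $R(S^2,6)_B$ is only $4$-dimensional, so $R(S^2,6)\setminus R(S^2,6)_B\neq\varnothing$. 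Connectedness: under the identifications $\CC P^3\cong R(F_2)$ and $K\cong R(F_2)_{\rm ab}$, together with $R(F_2)_{\rm irr}=R(F_2)\setminus R(F_2)_{\rm ab}$ from (\ref{surfde}), we have $R(F_2)_{\rm irr}\cong\CC P^3\setminus K$, which is connected because the complement of a proper complex-analytic subvariety of a connected complex manifold is connected. An open, closed, nonempty subset of a connected space is the whole space, so $\im\tp'=R(F_2)_{\rm irr}$; with $\tp(R(S^2,6)_B)=R(F_2)_{\rm ab}$ and $R(F_2)=R(F_2)_{\rm ab}\sqcup R(F_2)_{\rm irr}$, this gives $\tp(R(S^2,6))=R(F_2)$.

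I expect the main point to handle carefully --- more a matter of bookkeeping than a deep obstacle --- to be confirming that $R(S^2,6)\setminus R(S^2,6)_B$ is a topological manifold of dimension exactly $\dim R(F_2)_{\rm irr}=6$, since that is what licenses the use of invariance of domain; this rests on the structural results of Section~\ref{properties} specialized to $n=3$. The genuinely substantive inputs --- the Ramanan--Seshadri identification of $R(F_2)$ with $\CC P^3$ and of $R(F_2)_{\rm ab}$ with the Kummer surface $K$, and the local models for $R(S^2,6)$ near its abelian points --- are imported rather than proved here. One could instead try to exhibit a preimage in $R(S^2,6)$ of each irreducible class of $R(F_2)$ directly, by lifting representations across the index-$2$ normal subgroup $\ker\alpha\lhd\pi_1(S^2\setminus\{a_i\})$; that is closer to what is needed for the explicit fiber description in Corollary~\ref{extend}, but for mere surjectivity the argument above is the most economical.
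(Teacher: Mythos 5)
Your argument is correct, but it takes a genuinely different route from the paper's. The paper proves surjectivity constructively: it reads off a presentation of $\pi_1(F_2\setminus\{\tilde a_i\})$ from Figure \ref{fig1}, writes its generators as explicit words in the meridians $x_i$, and reduces the extension problem to producing a single traceless unit quaternion $x_1$ with $\Real(x_1a)=\Real(x_1b)=\Real(x_1c)=\Real(x_1d)=\Real(x_1e)=0$ for five specific elements built from $\rho(r_i),\rho(s_i)$; the quaternionic Lemma \ref{lem5.2} supplies such an $x_1$ whenever $abcd=dcba$. That approach buys two things your argument does not: the explicit fiber description recorded in Corollary \ref{extend} (which you rightly note at the end), and a uniform treatment of abelian and irreducible targets. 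Your soft argument --- image closed by compactness, open on the irreducible locus by invariance of domain between $6$-manifolds (local injectivity coming from Theorem \ref{prop2.2}(1),(2)), target connected --- checks out step by step, and the dimension bookkeeping you flag is exactly the right thing to worry about and is correctly handled. Two remarks on your imported inputs. First, your reduction to $R(F_2)_{\rm irr}$ uses the full strength of Theorem \ref{prop2.2}(3), namely that $\tp$ carries $R(S^2,6)_B$ \emph{onto} $R(F_2)_{\rm ab}$; the appendix proof of that statement establishes the two containments $\tp(R(S^2,6)_B)\subseteq R(F_2)_{\rm ab}$ and $\tp^{-1}(R(F_2)_{\rm ab})\subseteq R(S^2,6)_B$ but leaves the surjectivity onto $R(F_2)_{\rm ab}$ as an (easy, but unwritten) computation with the binary dihedral representations and the words $p_*(r_i),p_*(s_i)$; in the paper it is the constructive proof of Theorem \ref{main} itself that certifies this for $n=3$, so you should either supply that short computation or flag the dependence. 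Second, you invoke Narasimhan--Ramanan to see that $R(F_2)_{\rm irr}\cong\CC P^3\setminus K$ is connected; this is not circular, but it is heavier machinery than the paper needs for this theorem, and could be replaced by the standard connectedness of the irreducible locus of $R(F_g)$.
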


The proof of Theorem \ref{main} relies on the following lemma.
 
\begin{lem}\label{lem5.2} Suppose that $a,b,c,d\in SU(2)$ satisfy $abcd=dcba$. Then there exists $x\in SU(2)$ satisfying 
$$0=\Real(x)=  \Real(xa)=  \Real(xb)= \Real(xc)= \Real(xd)=  \Real(x(abcd)^{-1})
 $$
\end{lem}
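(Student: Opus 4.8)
The plan is to turn the six scalar equations into a single linear‑algebra statement on $\HH\cong\RR^4$. Recall from the discussion above that $-\Real(vw)$ restricts, for $v\in su(2)$ and $w\in\HH$, to the standard inner product $\langle v,w\rangle$ of $\RR^4$. A unit quaternion $x$ with $\Real(x)=0$ is exactly an element of $S^2_\bbi$, and for such $x$ the remaining conditions $\Real(xa)=\Real(xb)=\Real(xc)=\Real(xd)=\Real\!\bigl(x(abcd)^{-1}\bigr)=0$ say precisely that $x$ is orthogonal to $a,b,c,d$ and to $abcd$ (here one uses $\langle x,\bar q\rangle=-\langle x,q\rangle$ for $x\in su(2)$, so $\Real(x(abcd)^{-1})=\langle x,abcd\rangle$). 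Writing $q_{\mathrm{im}}=q-\Real(q)$, the Lemma thus reduces to producing a unit vector of $su(2)\cong\RR^3$ orthogonal to $a_{\mathrm{im}},b_{\mathrm{im}},c_{\mathrm{im}},d_{\mathrm{im}},(abcd)_{\mathrm{im}}$, equivalently to showing these five vectors span a subspace of dimension $\le 2$.

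The first simplification: for $x\in S^2_\bbi$, conjugation by $x$ is the rotation of $su(2)$ by $\pi$ about the axis $x$, so $xqx^{-1}=\bar q$ is equivalent to $x\perp q$. Hence if $x\in su(2)$ is a unit vector orthogonal to each of $a,b,c,d$, then $x(abcd)x^{-1}=\bar a\bar b\bar c\bar d=(dcba)^{-1}$, which by the hypothesis $abcd=dcba$ equals $\overline{abcd}$; so $x$ is automatically orthogonal to $abcd$ as well. It therefore suffices to prove $\dim\operatorname{span}\{a_{\mathrm{im}},b_{\mathrm{im}},c_{\mathrm{im}},d_{\mathrm{im}}\}\le 2$.

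To get this from the relation I would use the elementary identity $\Real(xyz)-\Real(zyx)=-2\det[x_{\mathrm{im}},y_{\mathrm{im}},z_{\mathrm{im}}]$ (a one‑line quaternion expansion, where $\det[\cdot]$ is the scalar triple product), together with the two rewritings $d^{-1}(abc)d=cba$ and $a(bcd)a^{-1}=dcb$ of the hypothesis. Since $\Real$ is conjugation invariant, these give $\Real(abc)=\Real(cba)$ and $\Real(bcd)=\Real(dcb)$, hence $\det[a_{\mathrm{im}},b_{\mathrm{im}},c_{\mathrm{im}}]=0=\det[b_{\mathrm{im}},c_{\mathrm{im}},d_{\mathrm{im}}]$; that is, $\{a_{\mathrm{im}},b_{\mathrm{im}},c_{\mathrm{im}}\}$ and $\{b_{\mathrm{im}},c_{\mathrm{im}},d_{\mathrm{im}}\}$ are each linearly dependent. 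When $b_{\mathrm{im}}$ and $c_{\mathrm{im}}$ are linearly independent this already finishes the proof, since then $a_{\mathrm{im}}$ and $d_{\mathrm{im}}$ both lie in the plane $\operatorname{span}\{b_{\mathrm{im}},c_{\mathrm{im}}\}$.

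The main obstacle is the remaining case, where $b_{\mathrm{im}}$ and $c_{\mathrm{im}}$ are linearly dependent and the two constraints above say nothing about $a$ or $d$. In that case $b$ and $c$ commute, so $abcd=dcba$ becomes $PmQ=QmP$ with $P=a$, $m=bc$, $Q=d$, and it is enough to prove the auxiliary fact that $PmQ=QmP$ forces $P_{\mathrm{im}},m_{\mathrm{im}},Q_{\mathrm{im}}$ to lie in a common plane. For this, a direct manipulation of $PmQ=QmP$ shows that $Pm$ and $Qm$ both commute with $QP^{-1}$; hence either $QP^{-1}=\pm1$ (so $P=\pm Q$), or one of $Pm,Qm$ equals $\pm1$ (so $P$ or $Q$ is $\pm m^{-1}$), or $Pm$ and $Qm$ lie in a common maximal torus $T$, in which case $P,Q\in Tm^{-1}$ and one computes that the imaginary parts of all elements of such a coset lie in a single $2$‑plane containing $m_{\mathrm{im}}$. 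Combining this with $(bc)_{\mathrm{im}}\in\operatorname{span}\{b_{\mathrm{im}},c_{\mathrm{im}}\}$ — and handling $bc=\pm1$, where the relation forces $ad=da$, by hand — yields $\dim\operatorname{span}\{a_{\mathrm{im}},b_{\mathrm{im}},c_{\mathrm{im}},d_{\mathrm{im}}\}\le 2$ in every case, so the vector $x$ constructed above completes the proof. The bookkeeping in this degenerate case is the only real work; the generic case is immediate.
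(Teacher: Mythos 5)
Your proof is correct, and although it shares its skeleton with the paper's argument, the case analysis is organized differently enough to be worth comparing. Both proofs make the same two opening moves: first, drop the condition $\Real\bigl(x(abcd)^{-1}\bigr)=0$ by showing it is automatic once $x\in S^2_\bbi$ is orthogonal to $a,b,c,d$ (your derivation via $xqx^{-1}=\bar q$ for $q\perp x$ is cleaner than the paper's ``lengthy but uninteresting calculation''); second, extract coplanarity of the imaginary parts from consequences of the relation of the form $\Real(uvw)=\Real(wvu)$ --- your triple-product identity is exactly the paper's observation $\Real((uv-vu)w)=0$ in different notation, since $uv-vu=2\,u_{\mathrm{im}}\times v_{\mathrm{im}}$. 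Where you diverge is afterward. The paper runs through the ordered list of commutators $[a,b],[b,c],[c,d],[d,a],[a,c],[b,d]$, in each first-nonvanishing case taking $x$ proportional to the corresponding $uv-vu$ and checking the remaining orthogonalities one at a time, with the fully commuting case last. You instead note that the single hypothesis $[b,c]\ne 1$ already yields both $\det[a_{\mathrm{im}},b_{\mathrm{im}},c_{\mathrm{im}}]=0$ and $\det[b_{\mathrm{im}},c_{\mathrm{im}},d_{\mathrm{im}}]=0$, hence coplanarity at once, and you dispose of the one degenerate case $[b,c]=1$ structurally: the relation collapses to $amd=dma$ with $m=bc$, one checks that $am$ and $dm$ both commute with $da^{-1}$, so (away from the edge cases $m=\pm1$, which forces $ad=da$, and $da^{-1}=\pm1$, both of which you correctly flag) $a$ and $d$ lie in a coset $Tm^{-1}$ of a maximal torus, whose imaginary parts fill out a $2$-plane containing $m_{\mathrm{im}}$ and hence $b_{\mathrm{im}},c_{\mathrm{im}}$. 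The payoff is a two-case rather than seven-case analysis; the cost is the extra torus-coset observation. Both are complete and elementary, and either would serve the application in Theorem \ref{main} equally well.
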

\begin{proof}  A lengthy but uninteresting calculation shows that if $a',b',c',d'\in SU(2)$ satisfy $0=\Real(\bbi a')=\Real(\bbi b')=\Real(\bbi c')=\Real(\bbi d')$, then $\Real(\bbi a'b'c'd')=-\Real(\bbi d'c'b'a')$. Conjugating by  $g\in SU(2)$ one may replace $\bbi$ by any purely imaginary quaternion $x$ and reach the same conclusion. Therefore, if $x\in S^2_\bbi$ satisfies  
 \begin{equation}
\label{ugly}
0=\Real(x)=  \Real(xa)=  \Real(xb)= \Real(xc)= \Real(xd),
\end{equation}
then $\Real(xdcba)=\Real(xabcd)=-\Real(xdcba)$ and hence
$\Real(xabcd)=0$, which, since $\Real(x)=0$, implies $\Real(x(abcd)^{-1})=0$.  Hence it suffices to find an $x$ satisfying Equation (\ref{ugly}).

Suppose first that $[a,b]\ne 1$. Since  $\Real(ab-ba)=0$,  $x=\frac{ab-ba}{\| ab-ba\|}$ is a unit quaternion satisfying $\Real(x)=0$. The identities
$$d^{-1}(abc)d=cba \text{ and } c^{-1}(d^{-1}ab)c=bad^{-1}$$
follow from $abcd=dcba$, and hence $\Real(abc)=\Real(cba)$ and $\Real(d^{-1}ab)=\Real(bad^{-1})$.  Therefore $\Real((ab-ba)c)=0$, which implies $\Real(xc)=0$. Similarly $\Real(xd^{-1})=0$, and so $\Real(x)=0$ implies that $\Real(xd)=0$. Next, $\Real((ab-ba)a)=\Real(aba-ba^2)=0$ so that $\Real(xa)=0$; similarly $\Real(xb)=0$.     Thus $x$ satisfies Equation (\ref{ugly}), as desired.

If $[a,b]=1$ but $[b,c]\ne 1$, then repeat the argument, taking $x=\frac{bc-cb}{\| bc-cb\|}$, using the identities $a(bcd)a^{-1}=dbc$ and $d^{-1}(abc)d=cba$.

If $1=[a,b]=[b,c]=1$ but $[c,d]\ne 1$, again repeat the argument, taking $x=\frac{cd-dc}{\| cd-dc\|}$. Next, if $1=[a,b]=[b,c]=[c,d]$ but $[d,a]\ne 1$, then repeat, taking $x=\frac{da-ad}{\| da-ad\|}$.

If $1=[a,b]=[b,c]=[c,d]=[d,a]$, then $bacd=abcd=dcba=dcab$, so repeating the steps above we can find an  $x$ satisfying Equation (\ref{ugly}) provided $[a,c]\ne1$ or $[b,d]\ne 1$.

If $a,b,c,d$ pairwise commute, then they all lie in the same circle subgroup $\{e^{\theta Q}\}$ for some $Q$ a purely imaginary unit quaternion.  Choose $x$  satisfying $\Real(x)=0$ and $\Real(xQ)=0$ (for example, if $Q=g\bbi g^{-1},$ take $x=  g\bbj g^{-1}$). Then $\Real(e^{\theta Q}x)=0$ for all $\theta$, so that $x$ satisfies Equation (\ref{ugly}), completing the proof of Lemma \ref{lem5.2}.
\end{proof}

\begin{proof}[Proof of Theorem \ref{main}]
 Figure \ref{fig1} shows ten curves on a 6-punctured genus 2 surface $F_2\setminus \{\ta_i\}_{i=1}^6$. Rotation  of angle $\pi$ about the horizontal axis is the deck transformation of the cover $p:F_2\setminus \{\ta_i\}\to S^2\setminus \{a_i\}$.

\begin{center}
\begin{figure}[!ht]
\def\svgwidth{5in} 
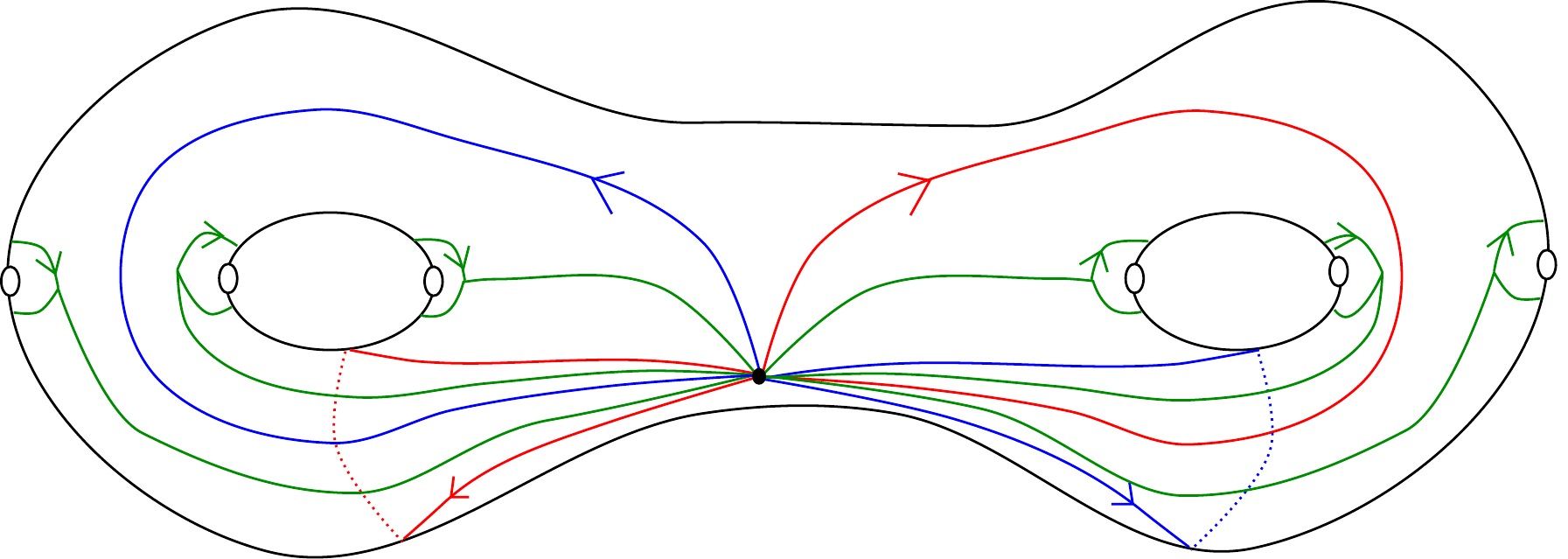
\caption{\label{fig1}}
\end{figure}
\end{center}

From the figure one reads off the presentation: 
$$\pi_2:= \pi_1(F_{2}\setminus \{\ta_i\})=\langle r_1,s_1,r_2,s_2,y_1,\dots,y_6~|~
 r_1y_3s_1y_2r_1^{-1}y_1s_1^{-1}r_2y_6 s_2y_5r_2^{-1}y_4 s_2^{-1}=1\rangle. $$
 The monomorphism to
 $$\pi:= \pi_1(S^2\setminus \{a_i\})=\langle x_1,\dots,x_6~|~
x_2x_2x_3x_4x_5x_6=1\rangle $$
induced by $p$ is given by 
$$p_*(y_i)=x_i^2, ~p_*(r_1)=x_1x_2, ~p_*(s_1)=x_3^{-1}x_2^{-1},~p_*(r_2)=x_4x_5,~ p_*(s_2)=x_6^{-1}x_5^{-1}.$$

Given $\rho:\pi_2\to SU(2)$ satisfying $\rho(y_i)=-1$, the task is to extend $\rho$ to $\pi$. For clarity of exposition we abuse notation and identify generators of $\pi_2$ with their image under $\rho$. Thus  we are given $r_i,s_i, y_i\in SU(2)$ satisfying the relation for $\pi_2$ and in addition $y_i=-1$, and we seek $x_i\in SU(2)$  satisfying
\begin{equation}
\label{require}
\Real(x_i)=0,~ r_1=x_1x_2,~s_1=x_3^{-1}x_2^{-1},~r_2=x_4x_5,~ s_2=x_6^{-1}x_5^{-1},\text{ and } x_1\dots x_6=1
\end{equation}
(recall that $x_i^2=-1$ is equivalent to $\Real(x_i)=0$).
Since $y_i=-1$ and $-1$ is central in $SU(2)$, the relation in the presentation of $\pi_2$ implies that the surface relation $$[r_1,s_1][r_2,s_2]=1$$ holds.

\medskip

Define $a,b,c,d,e$ by 
\begin{equation}\label{abcde} a=r_1, ~b=s_1^{-1}r_1^{-1}, ~c=s_2s_1, ~d=s_1^{-1}r_2s_2^{-1}, ~ e=r_2^{-1}s_1.\end{equation}
Then 
$$
cdeab=s_2s_1s_1^{-1}r_2s_2^{-1}r_2^{-1}s_1r_1s_1^{-1}r_1^{-1}=[s_2,r_2][s_1,r_1]=([r_1,s_1][r_2,s_2])^{-1}=1
$$
and 
$$edcba=r_2^{-1}s_1s_1^{-1}r_2s_2^{-1}s_2s_1s_1^{-1}r_1^{-1}r_1=1$$
and hence 
$$
e^{-1}=abcd=dcba
.$$
Lemma \ref{lem5.2} implies that  there exists an $x_1\in SU(2)$ satisfying 
$$0=\Real(x_1)=  \Real(x_1a)=  \Real(x_1b)= \Real(x_1c)= \Real(x_1d)=  \Real(x_1e).
$$
The assignment
 \begin{equation}\label{exes} x_2=x_1^{-1}r_1, ~ x_3=r_1^{-1}x_1s_1^{-1},~ x_4=s_1x_1^{-1}s_2,~ x_5=s_2^{-1}x_1s_1^{-1}r_2,~ x_6=r_2^{-1}s_1x_1^{-1}\end{equation}
is easily verified to satisfy (\ref{require}).  For example 
$$\Real(x_4)=\Real(s_1x_1^{-1}s_2)=\Real(x_1^{-1}s_2s_1)=-\Real(x_1c)=0,$$
$$x_6^{-1}x_5^{-1}=x_1s_1^{-1}r_2r_2^{-1}s_1x_1^{-1}s_2=s_2,
$$
and 
$$x_1x_2x_3x_4x_5x_6=x_1
x_1^{-1}r_1r_1^{-1}x_1s_1^{-1} s_1x_1^{-1}s_2s_2^{-1}x_1s_1^{-1}r_2r_2^{-1}s_1x_1^{-1}=1.$$
We leave the rest of the verifications, which complete the proof of Theorem \ref{main}, to the reader.
\end{proof}

It is worth emphasizing that the proofs of Theorem \ref{main}  and Lemma \ref{lem5.2}, and in particular Equations (\ref{abcde}) and (\ref{exes}), explicitly  give  the    extension of $\rho:\pi_1(F_2)\to SU(2)$ to a traceless representation of $\pi_1(S^2\setminus \{a_i\}_{i=1}^6)$.	We state this formally.

\begin{cor}\label{extend} Let $\rho:\pi_1(F_2)=\langle r_1, s_1, r_2, s_2~|~[r_1,s_1][r_2,s_2]\rangle\to SU(2)$ be a representation of the fundamental group of a closed surface of genus 2. Define 
$$a=\rho(r_1),  ~b=\rho(s_1^{-1}r_1^{-1}), ~c=\rho(s_2s_1), ~d=\rho(s_1^{-1}r_2s_2^{-1}), ~ e=\rho(r_2^{-1}s_1).$$
Fix a sign $\ep\in \{\pm 1\}$. If $[a,b]\ne 1$, define
$\rho_\ep(x_1)=\ep\frac{
b-ba}{\|ab-ba\|}$. If $[a,b]=1$ but $[b,c]\ne 1$, define $\rho_\ep(x_1)=\ep \frac{bc-cb}{\|bc-cb\|}$, and so forth, going through the ordered list of commutators $[a,b], [b,c], [c,d], [d,a], [a,c], [b,d]$. If all of these commutators are trivial, so that $a,b,c,d$ lie in a subgroup $\{e^{\alpha Q}\}$, then define $\rho_\ep(x_1)$ to be $P$ for any $P\in S^2_\bbi$ satisfying $\Real(PQ)=0$. 
Set 
$$\begin{multlined}\rho_\ep( x_2)=\rho_\ep(x_1)^{-1}\rho(r_1), ~ \rho_\ep(x_3)=\rho(r_1)^{-1}\rho_\ep(x_1)\rho(s_1^{-1}),~ \rho_\ep(x_4)=\rho(s_1)\rho_\ep(x_1)^{-1}\rho(s_2),~\\ \rho_\ep(x_5)=\rho(s_2^{-1})\rho_\ep(x_1)\rho(s_1^{-1}r_2),~ \rho_\ep(x_6)=\rho(r_2^{-1}s_1)\rho_\ep(x_1)^{-1}.\hskip1in
\end{multlined}$$

Then $\rho_{\pm 1}\in R(S^2,6)$  and  $\tilde p(\rho_\pm)=\rho$.  The two choices $\rho_{\pm 1}$ represent the two conjugacy classes in the fiber of  when $\rho$ is non-abelian, and give conjugate binary dihedral representations when $\rho$ is abelian.\qed
\end{cor}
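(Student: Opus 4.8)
The plan is to observe that the corollary simply repackages --- now with a representation $\rho$ of the \emph{closed} genus-$2$ surface group as input --- the construction already carried out in the proofs of Lemma \ref{lem5.2} and Theorem \ref{main}, so that most of the work is the verification of $SU(2)$ identities and only the statements about the fiber of $\tilde p$ use new input. First I would check that $\rho_\ep\in R(S^2,6)$ and $\tilde p(\rho_\ep)=\rho$. The words defining $a,b,c,d,e$ satisfy $e^{-1}=abcd=dcba$ already in $\pi_1(F_2)$ --- this is the computation $cdeab=edcba=1$ from the proof of Theorem \ref{main}, which uses only the surface relation $[r_1,s_1][r_2,s_2]=1$ --- so the elements $a,b,c,d\in SU(2)$ satisfy the hypothesis of Lemma \ref{lem5.2}, and the six-branch recipe for $\rho_\ep(x_1)$ is precisely the construction of $x$ in the proof of that lemma, the sign $\ep$ realizing the freedom $x\mapsto -x$ (which preserves every condition $\Real(xw)=0$). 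Hence $x_1:=\rho_\ep(x_1)$ satisfies $0=\Real(x_1)=\Real(x_1a)=\cdots=\Real(x_1d)$, and $\Real(x_1e)=0$ because $e=(abcd)^{-1}$. Substituting the displayed formulas for $x_2,\dots,x_6$ and using $\Real(pq)=\Real(qp)$ together with $x_1^{-1}=-x_1$ rewrites each $\Real(\rho_\ep(x_i))$ as $\pm\Real(x_1w)$ for some $w\in\{a,\dots,e\}$, so $\rho_\ep(x_i)\in S^2_\bbi$ for all $i$; and the same telescoping as in the last display of the proof of Theorem \ref{main} gives $\rho_\ep(x_1)\cdots\rho_\ep(x_6)=1$ and, applied to $\rho_\ep(p_*(r_i))$ and $\rho_\ep(p_*(s_i))$, gives $\tilde p(\rho_\ep)=\rho$.

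Next I would compare $\rho_+$ and $\rho_-$. Each $x_i$ with $i\ge 2$ contains a single factor $x_1^{\pm1}$ and is otherwise independent of $\ep$, so $\rho_-(x_1)=-\rho_+(x_1)$ forces $\rho_-(x_i)=-\rho_+(x_i)=\alpha(x_i)\rho_+(x_i)$ for every $i$, i.e.\ $\rho_-=\alpha_*\rho_+$. By Theorem \ref{prop2.2}(2), $\tilde p$ descends to an injection on $R(S^2,6)/\langle\alpha_*\rangle$, so the fiber of $\tilde p$ through $\rho_+$ is exactly the $\alpha_*$-orbit $\{\rho_+,\rho_-\}$. When $\rho$ is non-abelian these are two distinct classes: $R(S^2,6)_B$ is the fixed set of $\alpha_*$ (Theorem \ref{prop2.2}(1)) and also equals $\tilde p^{-1}(R(F_2)_{\rm ab})$ --- since $\tilde p$ carries $R(S^2,6)_B$ onto $R(F_2)_{\rm ab}$ (Theorem \ref{prop2.2}(3)) while a non-trivial $\alpha_*$-orbit contains no point of $R(S^2,6)_B$ --- and $\tilde p(\rho_+)$ is non-abelian, so $\rho_+\notin R(S^2,6)_B$ and $\rho_-=\alpha_*\rho_+\ne\rho_+$; thus the two recipes yield the two conjugacy classes in $\tilde p^{-1}(\rho)$.

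For the abelian case I would first note that a commutator in the list $[a,b],[b,c],[c,d],[d,a],[a,c],[b,d]$ is non-trivial only when $\rho$ is non-abelian, and conversely that if all six vanish then $a,b,c,d$ --- hence also $e=(abcd)^{-1}$ --- pairwise commute, and since $a,b,c,e$ generate $\pi_1(F_2)$ (invert the defining words) $\rho$ is then abelian. So for abelian $\rho$ the construction enters its last branch, where $\rho_\ep(x_1)=P$ carries no $\ep$-dependence; any two admissible $P$ lie on the circle of $S^2_\bbi$ orthogonal to $Q$ and differ by conjugation by an element of $\{e^{\theta Q}\}$, which fixes the abelian $\rho$, so $\rho_+=\rho_-$ are conjugate --- and binary dihedral, since $\tilde p(\rho_+)$ is abelian and $\tilde p^{-1}(R(F_2)_{\rm ab})=R(S^2,6)_B$. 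The step I expect to demand the most care is the fiber/orbit bookkeeping of the second paragraph --- especially the identity $\tilde p^{-1}(R(F_2)_{\rm ab})=R(S^2,6)_B$ --- together with the check that the degenerate branch of the construction is reached \emph{exactly} for abelian $\rho$, so that the fiber there reduces to a single class, consistently with Theorem \ref{prop2.2}.
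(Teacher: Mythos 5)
Your proof is correct and follows exactly the route the paper intends: the corollary is stated without a separate proof precisely because it repackages the constructions of Lemma \ref{lem5.2} and the proof of Theorem \ref{main} (Equations (\ref{abcde}) and (\ref{exes})), and your verification of the real-part conditions, the telescoping identities, and the fiber bookkeeping via Theorem \ref{prop2.2} is that intended argument. The extra care in your second and third paragraphs --- identifying the fiber with the $\alpha_*$-orbit, deducing $\tp^{-1}(R(F_2)_{\rm ab})=R(S^2,6)_B$ from the three parts of Theorem \ref{prop2.2}, and checking that the degenerate branch of the construction occurs exactly for abelian $\rho$ --- supplies details the paper leaves implicit, and is sound.
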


\medskip

We turn now to a more precise identification of the singularities of $R(S^2,6)$ and to a local description of the map $\tilde p: R(S^2,6)\to R(F_2)$ near the singularities. 

We first recall some aspects of the proof of Theorem \ref{thm1.2}. The cone neighborhoods of each point in $R(S^2,6)_{\rm ab}$ are all identical. Let
$\rho_0\in R(S^2,6)_{\rm ab}$ denote the representation which sends $x_\ell$ to $\bbi$, $\ell=1,\dots,6$.  
 Equation (\ref{cutout}), rewritten in complex coordinates,   shows that  a neighborhood of $\rho_0$ in $R(S^2,6)$ is homeomorphic to $h^{-1}(0)/S^1$, where 
\begin{equation}\label{cut}
h:\CC^4\to \RR,~ h(z_1,z_2,z_3,z_4)= \Real(\bbi e^{-z_1\bbj}e^{z_2\bbj}e^{-z_3\bbj}e^{z_4\bbj}).
\end{equation}
The correspondence $h^{-1}(0)/S^1\cong {\rm nbd}(\rho_0)$ 
 takes $(z_1,z_2,z_3,z_4)\in h^{-1}(0)$ to the representation
\begin{equation}\label{corr}
x_1\mapsto \bbi,~x_2\mapsto e^{z_1\bbj}\bbi,~
x_3\mapsto e^{z_2\bbj}\bbi,~ x_4\mapsto e^{z_3\bbj}\bbi,~x_5\mapsto e^{z_4\bbj}\bbi,~x_5\mapsto (\bbi e^{z_1\bbj}\bbi e^{z_2\bbj}\bbi e^{z_2\bbj}\bbi e^{z_3\bbj}\bbi e^{z_4\bbj}\bbi)^{-1}.\end{equation}

The $S^1$ action is given by $e^{\theta\bbi} (z_1,z_2,z_3,z_4)=(e^{2\theta\bbi}z_1,e^{2\theta\bbi}z_2,e^{2\theta\bbi}z_3,
e^{2\theta\bbi}z_4)$. For convenience identify $S^1/\pm1$ with $S^1$ and rewrite this action as $(e^{\theta\bbi}z_1,e^{ \theta\bbi}z_2,e^{ \theta\bbi}z_3,
e^{ \theta\bbi}z_4)$.  

  Let $\tau:\CC^4\to \CC^4$ denote complex conjugation in each coordinate.  Since  
$e^{\bar z\bbj}=\bbj e^{z\bbj}\bbj^{-1}$,
$$h(\bar z_1,\bar z_2, \bar z_3, \bar z_4)=\Real(\bbi e^{-\bar z_1\bbj}e^{\bar z_2\bbj}e^{-\bar z_3\bbj}e^{\bar z_4\bbj})=-h(z_1,z_2,z_3,z_4).
$$
In particular,  $\tau$ preserves $h^{-1}(0)$. Also, conjugating $-e^{z\bbj}\bbi$ by $\bbj$ yields $e^{\bar z\bbj}\bbi$, and hence
 $\tau$ corresponds, via (\ref{corr}), to the action of $\alpha_*$ on $R(S^2,6)$ (Equation (\ref{eq4})).

 The fixed points of $\tau$, namely $\RR^4\subset \CC^4$, lie in $h^{-1}(0)$  and correspond precisely to the binary dihedral representations.  Intersecting with the unit sphere $S^7\subset \CC^4$ gives 
 $$S^3=\RR^4\cap S^7\subset h^{-1}(0)\cap S^7\subset S^7$$
Away  from $0$, $\RR^4\subset \CC^4$ meets  each $S^1$ orbit in two points $\pm (r_1,r_2,r_3,r_4)$.   Hence taking the quotient by   $S^1$,and applying Theorem \ref{thm1.2}  yields the following links of a neighborhood of the singular point $\rho_0\in R(S^2, 6)_{\rm ab}$ in $R(S^2, 6)_{B}$ and $R(S^2, 6)$:
 \begin{equation*}
\setcounter{equation}{14}\label{flag}
\includegraphics{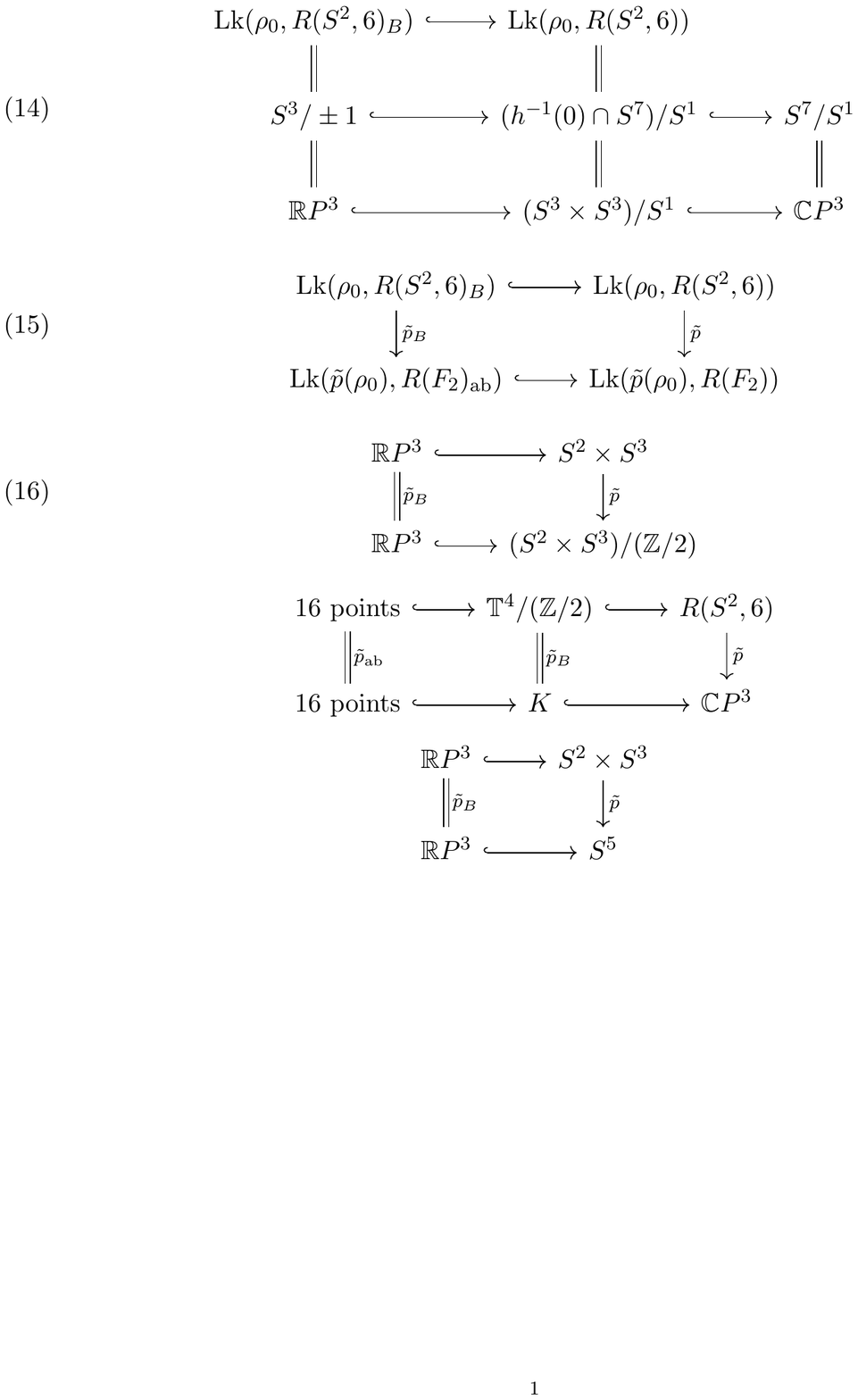}
\end{equation*}
with $\RR P^3\subset \CC P^3$ the fixed points of $\tau$.

\begin{thm}\label{link2} The link of the singular point $\rho_0\in R(S^2,6)_{ab}$  in $R(S^2,6)_B$ diffeomorphic to $\RR P^3$ and its link in  $R(S^2,6)$ is diffeomorphic to $S^2\times S^3$. Moreover, the link $S^2\times S^3$ embeds in $\CC P^3$, is invariant under complex conjugation, with fixed point set the link $\RR P^3$.
\end{thm}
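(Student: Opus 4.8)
The plan is to work entirely inside the local model recalled above, combining the structure of the proof of Theorem \ref{thm1.2} with one short computation and one standard fact about quotients of products of spheres.

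\textbf{The link in $R(S^2,6)_B$.} This is the case $n=3$ of Proposition \ref{prop2.6}: a neighbourhood of $\rho_0$ in $R(S^2,6)_B$ is a cone on $\RR P^{2n-3}=\RR P^3$. I will also note that it is transparent in the local model, where the binary dihedral locus corresponds to $\RR^4\subset h^{-1}(0)$ (the fixed set of $\tau$): since each $S^1$-orbit of a nonzero point of $\RR^4$ meets $\RR^4$ in exactly the antipodal pair $\pm(r_1,\dots,r_4)$, the image of $\RR^4\cap S^7=S^3$ in the link $(h^{-1}(0)\cap S^7)/S^1$ is $S^3/(\pm 1)=\RR P^3$.

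\textbf{The link in $R(S^2,6)$ is $S^2\times S^3$.} By the proof of Theorem \ref{thm1.2} (Appendix \ref{apB}) this link is $(h^{-1}(0)\cap S^7)/S^1$. First I record that $0$ is a nondegenerate critical point of $h$: expanding (\ref{cut}) to second order — the $n=3$ instance of the computation in Appendix \ref{apB} — the quadratic part of $h$ at $0$ is the $S^1$-invariant form $Q(z)=\Imm\!\big(\sum_{1\le k<l\le 4}(-1)^{k+l}z_k\bar z_l\big)$. Being $S^1$-invariant, $Q$ is a Hermitian form on $\CC^4$; its Gram matrix is $(2\sqrt{-1})^{-1}M$, where $M$ is the real skew-symmetric matrix with $M_{kl}=(-1)^{k+l}$ for $k<l$, and since $\operatorname{Pf}(M)=1$ the matrix $M$, hence $Q$, is nondegenerate, with $M$'s purely imaginary eigenvalues producing two positive and two negative eigenvalues of $Q$; thus $Q$ has signature $(2,2)$. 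By the equivariant Morse lemma the germ of $h^{-1}(0)$ at $0$ is $S^1$-equivariantly diffeomorphic to that of $Q^{-1}(0)$, so the link is $(Q^{-1}(0)\cap S^7)/S^1$. Now $Q^{-1}(0)\setminus\{0\}$ is $\CC^*$-equivariantly diffeomorphic to $(Q^{-1}(0)\cap S^7)\times\RR_{>0}$, whence $(Q^{-1}(0)\cap S^7)/S^1=(Q^{-1}(0)\setminus\{0\})/\CC^*$; and any $g\in GL(4,\CC)$ commutes with $\CC^*$, so taking $g$ with $g^{*}Qg=Q_0:=|z_1|^2+|z_2|^2-|z_3|^2-|z_4|^2$ as Hermitian matrices (possible by Sylvester's law for Hermitian forms) identifies this with $(Q_0^{-1}(0)\cap S^7)/S^1$. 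Finally $Q_0^{-1}(0)\cap S^7=\{z\in S^7: |z_1|^2+|z_2|^2=|z_3|^2+|z_4|^2\}\cong S^3\times S^3$ with the diagonal scalar $S^1$-action; writing $S^3=SU(2)$ and identifying the scalar $S^1$ with a circle subgroup acting by left translation, the map $(p,q)\mapsto(p^{-1}q,\;S^1p)$ descends to a diffeomorphism $(S^3\times S^3)/S^1\xrightarrow{\ \cong\ }SU(2)\times\big(S^1\backslash SU(2)\big)=S^3\times S^2$.

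\textbf{Invariance under conjugation and the fixed set.} The set $h^{-1}(0)\cap S^7$ is $S^1$-invariant, so its quotient is a subspace of $S^7/S^1=\CC P^3$. The involution $\tau$ preserves $h^{-1}(0)$ (because $h\circ\tau=-h$ by (\ref{cut})) and $S^7$, and satisfies $\tau(e^{\theta\bbi}z)=e^{-\theta\bbi}\tau(z)$; hence it descends to an involution of $(h^{-1}(0)\cap S^7)/S^1\cong S^2\times S^3$ which is the restriction of complex conjugation on $\CC P^3$. An $S^1$-orbit in $S^7$ is $\tau$-invariant exactly when it meets $\RR^4$, in which case it meets $\RR^4$ in a single antipodal pair; so the fixed set of conjugation on $\CC P^3$ is $(\RR^4\cap S^7)/(\pm 1)=\RR P^3$, and as $\RR^4\subset h^{-1}(0)$ the same computation shows that this $\RR P^3$ is the fixed set of $\tau$ on $S^2\times S^3$, which by the first part is the link of $\rho_0$ in $R(S^2,6)_B$.

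\textbf{Expected main obstacle.} The one point needing real care is extracting from the \emph{proof} of Theorem \ref{thm1.2} (not just its statement) an \emph{$S^1$-equivariant} linearization of $h$ at its Morse singularity, so that the link is identified with the model $(S^3\times S^3)/S^1$ as an $S^1$-space rather than merely abstractly; once the scalar $S^1$-action on the model is in hand, the Pfaffian/signature bookkeeping and the $SU(2)$ computation are routine.
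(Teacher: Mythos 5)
Your proposal is correct, and for the one genuinely nontrivial step --- identifying $(S^3\times S^3)/S^1$ with $S^2\times S^3$ --- it takes a different route from the paper. The paper never identifies the $S^1$-action: it quotes Goldstein--Lininger \cite{GL}, by which there are exactly two free $S^1$-actions on $S^3\times S^3$ up to equivalence, distinguished by $w_2$ of the orbit space, and then observes that the orbit space embeds in $\CC P^3$ with codimension one, hence has trivial normal bundle and $w_2(M)=w_2(\CC P^3)|_M=0$, forcing $M\cong S^2\times S^3$. You instead pin the action down concretely: the quadratic part of $h$ at $0$ is $S^1$-invariant, hence Hermitian, and nondegenerate of signature $(2,2)$ (your Pfaffian computation is consistent with the signature-zero statement in Appendix \ref{apB}), so by the equivariant Morse lemma and Hermitian Sylvester normalization the link becomes $(Q_0^{-1}(0)\cap S^7)/S^1$ with the diagonal Hopf action, and the $SU(2)$ coset trick computes the quotient directly. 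What your route buys: independence from \cite{GL}; an explicit identification of the link as the link of the threefold node $x^2+y^2+z^2+w^2=0$, which the paper only recovers afterwards via Narasimhan--Ramanan and \cite{Kauffman1}; and the embedding into $\CC P^3$ is then needed only for the ``Moreover'' clause rather than for the diffeomorphism type. What it costs is the equivariant Morse lemma, which you correctly single out as the point requiring care, since Theorem \ref{thm1.2} as stated only yields ``some free $S^1$-action.'' Your treatment of the $\RR P^3$ link and of the conjugation involution and its fixed set coincides with the discussion preceding the theorem in the paper.
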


\begin{proof} The only statement which is not covered by the discussion preceding the statement and Diagram (14) is the assertion that 
$(S^3\times S^3)/S^1$ is diffeomorphic to $S^2\times S^3$. 

The main result of \cite{GL} shows that there are only two inequivalent free $S^1$ actions on $S^3\times S^3$, distinguished by the second Stiefel-Whitney class of their orbit space.    Since the quotient $M=(S^3\times S^3)/S^1$ embeds in $\CC P^3$ with codimension 1, its normal bundle is trivial, and hence $w_2(M)=w_2(\CC P^3)|_M=0$. Thus \cite{GL} shows that $M\cong S^2\times S^3.$
\end{proof}

Referring to Diagram (7), one sees that the map $\tp:R(S^2, 6)\to R(F_2)$ induces a diagram of links of cone neighborhoods of $\rho_0$ and $\tp(\rho_0)$:
 \begin{equation*}
\label{flag2}\setcounter{equation}{15}
\includegraphics{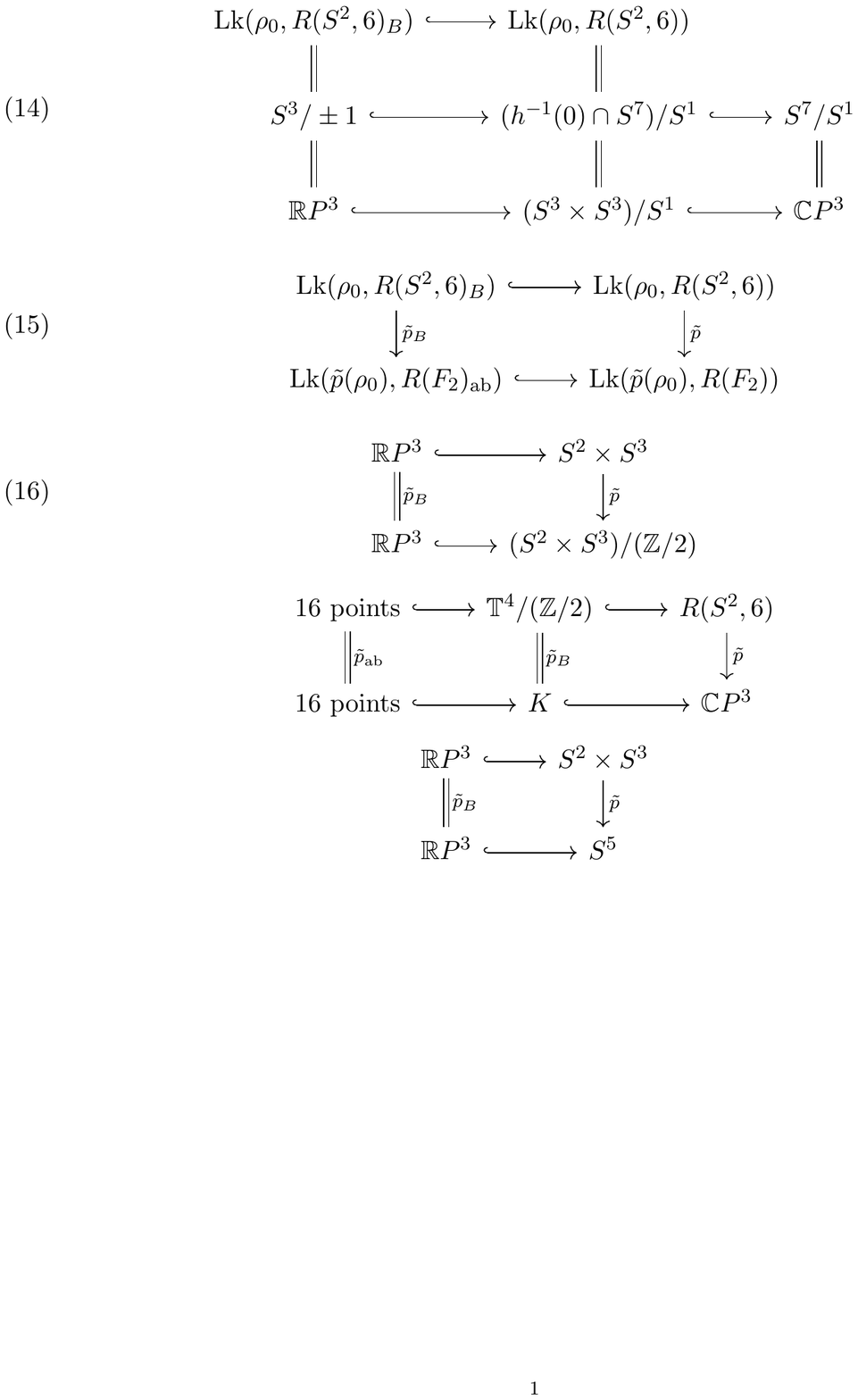}
\end{equation*}
 The left vertical map is a homeomorphism since $\alpha_*$ fixes the binary dihedral locus.  Hence $  {\rm Lk}(\tp(\rho_0), R(F_2)_{\rm ab}) =\RR P^3$.  Theorem \ref{main} implies that the right vertical map is the quotient by $\alpha_*$, and Theorem \ref{link2} identifies this map with the quotient
 $S^2\times S^3\to (S^2\times S^3)/(\ZZ/2)$,
 the action given by complex conjugation $\tau$ with fixed set $\RR P^3$. Hence Diagram (15) is, concretely:
 \begin{equation*}\label{quo}\setcounter{equation}{16}
 \includegraphics{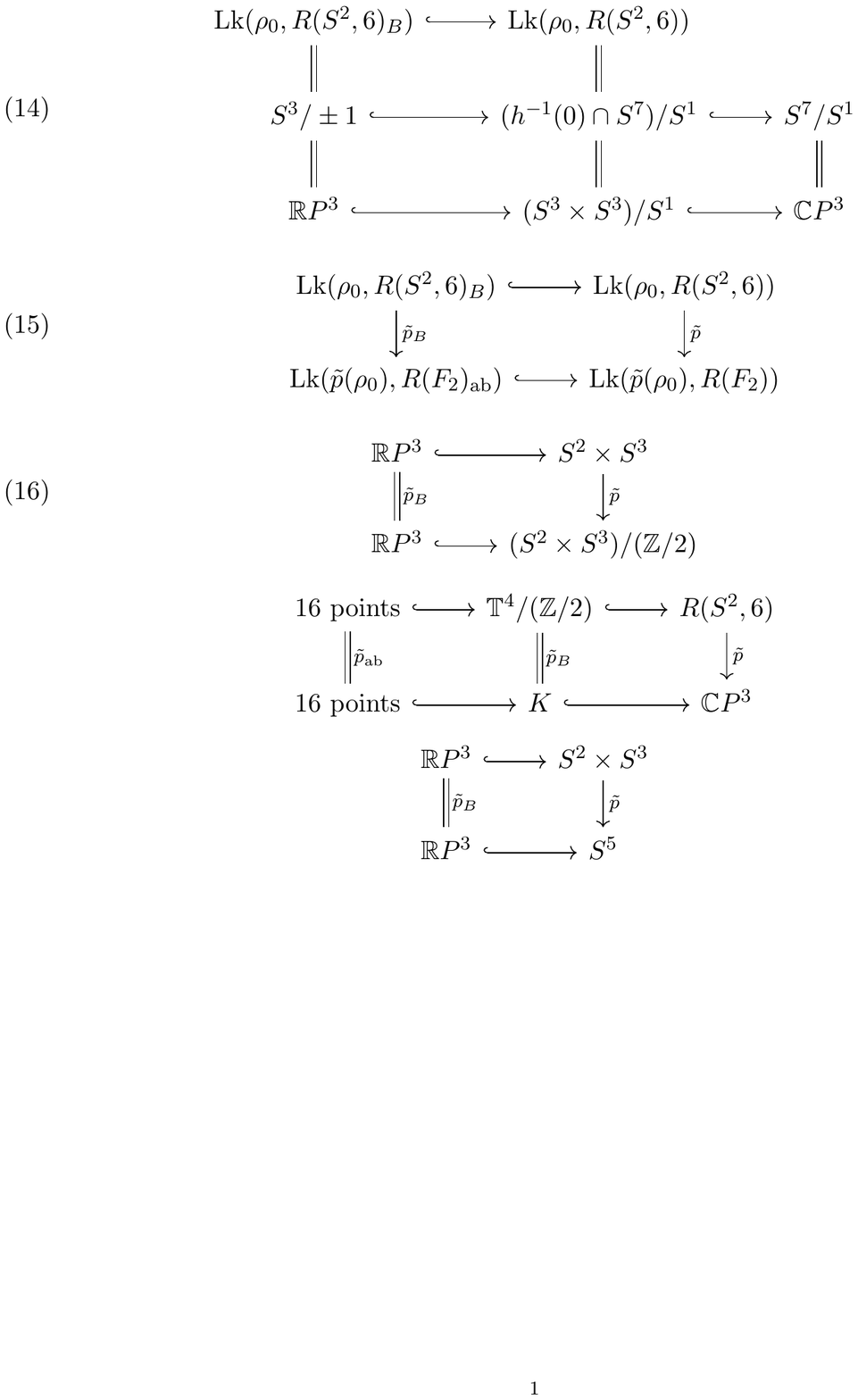}
\end{equation*}

Using results from algebraic geometry we can sharpen these identifications as follows. 
The Narasimhan-Seshadri theorem \cite{NS} states that $R(F_g)$ is homeomorphic to the moduli space $\mathcal{U}(2,0)_0$  of semi-stable rank 2, degree 0 holomorphic bundles over $F_g$  with trivial determinant.   
A theorem of Narasimhan-Ramanan \cite{nara}  states that $ \mathcal{U}(2,0)_0$ is homeomorphic to $\CC P^3$. Moreover,  in this identification,  $R(F_2)_{\rm ab}\subset R(F_2)$ corresponds to a singular Kummer surface $K$ with 16 nodal surface singularities, obtained as the quotient of the Jacobian of $F_2$ by an involution with 16 fixed points.

   A nodal surface singularity in $\CC^3$ takes the local form $x^2+y^2+z^2=0$; this is an isolated singularity with link $\RR P^3$.  The two fold branched cover of $\CC^3$ branched over a nodal surface singularity is the singular 3-fold in $\CC^4$ with equation $x^2+y^2+z^2+w^2=0$. The link of this singularity is diffeomorphic to 
 $S^2\times S^3$ (see e.g. \cite{Kauffman1})  Thus we recover  Diagram (16) from the Narasimhan-Ramanan theorem, and in addition conclude, since $\CC P^3$ is a 6-manifold, that 
 $$(S^2\times S^3)/(\ZZ/2)\cong S^5.$$
The Jacobian of $F_2$ is $H^1(F_2;\RR)/H^1(F_2,\ZZ)\cong \TT^4$.  Hence we recover from Diagram (7) the fact that $R(S^2,6)_B\cong  \TT^4/(\ZZ/2)$.

We summarize the previous discussion in the following theorem

\begin{thm}\label{thm4}  The diagram {\rm (7)} when $n=3$ is  equivalent to 
\begin{equation*}
\includegraphics{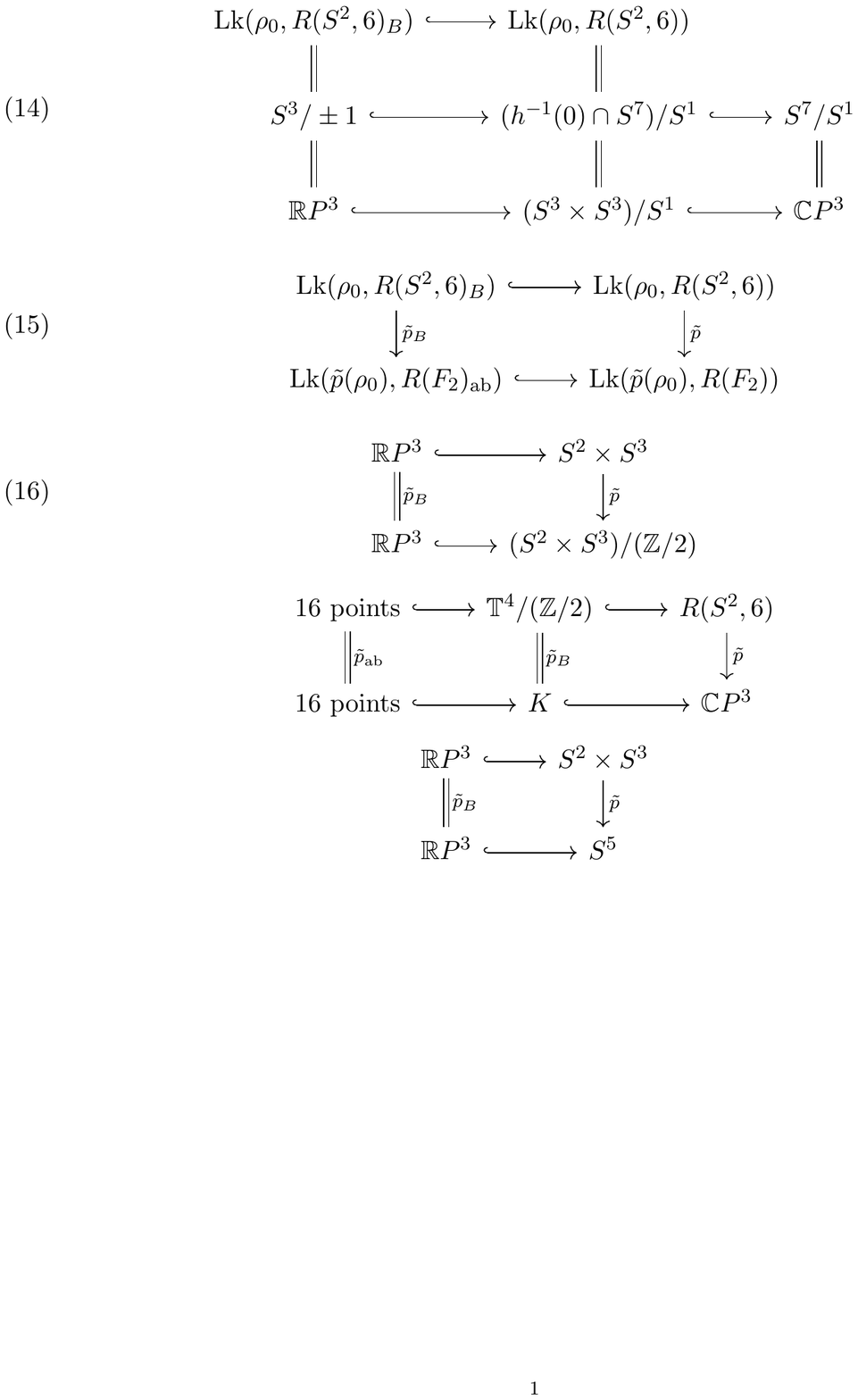}
\end{equation*}
with $\tp$ a 2-fold branched cover branched over the singular Kummer surface $K$ with 16 nodal singularities.

The diagram {\rm (15)} of links of any of the 16 singular points,   for $n=3$,  is  equivalent to 
\begin{equation*} 
\includegraphics{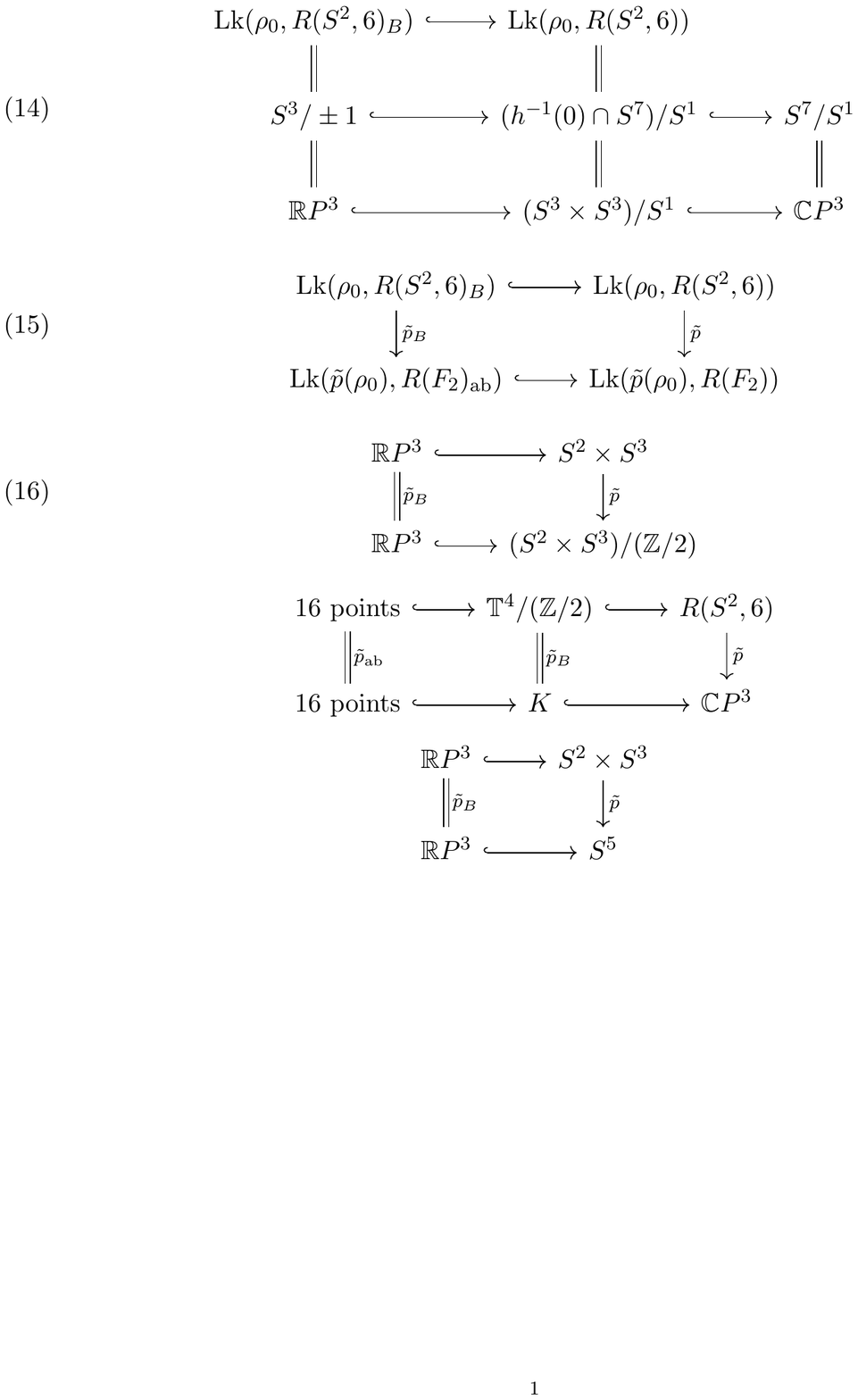}
\end{equation*}
\qed
\end{thm}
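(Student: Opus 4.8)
The plan is to assemble the ingredients proved above into the two diagrams of the theorem. First I would record what the map $\tp\colon R(S^2,6)\to R(F_2)$ is: Theorem \ref{main} gives its surjectivity, while Theorem \ref{prop2.2} shows $\tp$ is invariant under the involution $\alpha_*$, descends to an injection $R(S^2,6)/\langle\alpha_*\rangle\hookrightarrow R(F_2)$, and has fixed point set $\mathrm{Fix}(\alpha_*)=R(S^2,6)_B$. Together these say that $\tp$ realizes $R(F_2)$ as the orbit space $R(S^2,6)/\langle\alpha_*\rangle$; since $\alpha_*$ is an involution with fixed set $R(S^2,6)_B$, the map $\tp$ is a $2$-fold branched cover, with upstairs branch locus $R(S^2,6)_B$ and, by Theorem \ref{prop2.2}(3), downstairs branch locus $\tp(R(S^2,6)_B)=R(F_2)_{\rm ab}$. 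This pins down the vertical maps of Diagram (7) and the inclusions of the $B$ and $\rm ab$ loci.

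Next I would bring in the algebraic geometry. By Narasimhan--Seshadri \cite{NS}, $R(F_2)\cong\mathcal U(2,0)_0$, and by Narasimhan--Ramanan \cite{nara} the latter is $\CC P^3$, carrying $R(F_2)_{\rm ab}$ to the singular Kummer surface $K=\mathrm{Jac}(F_2)/(\ZZ/2)\subset\CC P^3$, whose $16$ nodal points are the images of the $2$-torsion of $\mathrm{Jac}(F_2)\cong\TT^4$; these are precisely the central representations $R(F_2)_{\rm cen}$. Pulling back along the branched cover, the $16$ nodal points lift to the $16$ points of $R(S^2,6)_{\rm ab}$, which by Proposition \ref{prop1.1} and Theorem \ref{thm1.2} are exactly the singular points of $R(S^2,6)$; this is consistent with the fact, recorded in Diagram (7), that $\tp$ restricts to a homeomorphism $R(S^2,6)_B\cong R(F_2)_{\rm ab}$ (Theorem \ref{prop2.2}). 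Finally, Proposition \ref{prop2.6} with $2n-2=4$ gives $R(S^2,6)_B\cong\TT^4/(\ZZ/2)$, matching $K=\mathrm{Jac}(F_2)/(\ZZ/2)$. This assembles the first diagram.

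For the diagram of links I would argue locally at a central point $\tp(\rho_0)\in R(F_2)_{\rm cen}$. There the pair $(\CC P^3,K)$ has the standard local model of a nodal surface singularity $(\CC^3,\{x^2+y^2+z^2=0\})$, whose link is $(S^5,\RR P^3)$. The $2$-fold cover of $\CC^3$ branched over the node is $\{x^2+y^2+z^2+w^2=0\}\subset\CC^4$, whose link is $S^2\times S^3$ (see e.g.\ \cite{Kauffman1}), branched over the $\RR P^3$, i.e.\ over the fixed set of complex conjugation on $S^2\times S^3$. This is exactly the content of Theorem \ref{link2}: the link of $\rho_0$ in $R(S^2,6)$ is $S^2\times S^3$, embedded in the link $S^5$ of $\tp(\rho_0)$ in $R(F_2)\cong\CC P^3$, invariant under $\alpha_*$ (which corresponds to complex conjugation $\tau$) with fixed set the link $\RR P^3$ of $\rho_0$ in $R(S^2,6)_B$. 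Combining these identifications turns Diagram (15) into the stated form, and — since $\CC P^3$ is a $6$-manifold — yields in addition $(S^2\times S^3)/\langle\alpha_*\rangle\cong S^5$, which is Diagram (16).

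The main obstacle, and the reason the preceding discussion is needed rather than a one-line deduction, is reconciling the two descriptions of the local double cover: the purely topological one from Theorem \ref{link2} (built on the classification \cite{GL} of free $S^1$-actions on $S^3\times S^3$ together with the vanishing of $w_2$ of the orbit space), and the algebro-geometric one coming from the node of $K$. One must verify that the deck involution of the algebraic branched cover agrees with $\alpha_*$; both are involutions of $S^2\times S^3\subset\CC P^3$ with fixed locus the real points $\RR P^3$, and since $S^2\times S^3$ embeds with trivial normal bundle, the branched-cover structure upstairs is forced to coincide with the one computed in Theorem \ref{link2}. Once this is in hand the theorem is the bookkeeping of all these identifications, together with the observation $R(S^2,6)_B\cong\TT^4/(\ZZ/2)$ recovered from Diagram (7).
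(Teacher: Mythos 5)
Your proposal is correct and follows essentially the same route as the paper, whose ``proof'' of Theorem \ref{thm4} is exactly the discussion preceding its statement: assemble the branched-cover structure from Theorems \ref{main} and \ref{prop2.2}, invoke Narasimhan--Seshadri and Narasimhan--Ramanan to identify $(R(F_2),R(F_2)_{\rm ab})$ with $(\CC P^3,K)$, and match the local node model and its double branched cover (link $S^2\times S^3$) with Theorem \ref{link2}. Your closing paragraph on reconciling the two descriptions of the local double cover is, if anything, slightly more careful than the paper, which simply derives Diagram (16) from the topological results and then observes that the algebro-geometric picture recovers it while supplying the extra identification $(S^2\times S^3)/(\ZZ/2)\cong S^5$.
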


\appendix

\section{Proof of Theorem \ref{thm1.2}}\label{apB}

\begin{proof}[Proof of Theorem \ref{thm1.2}] The map $f:(S^2_\bbi)^{2n-1}\to \RR$ given by $f(q_1,\dots,q_{2n-1})=\Real(q_1\dots q_{2n-1})$ is $SU(2)$ invariant and satisfies $f^{-1}(0)/SU(2)\cong R(S^2,2n)$.
It is convenient to replace the map $f$ by a slightly simpler map
$$g:(S^2_\bbi)^{2n-2}\to \RR,~g(q_2,\dots,q_{2n-1})=\Real(\bbi q_2\dots q_{2n-1}).$$
Since every $q_1\in S^2_\bbi$ is conjugate to $\bbi$, and since the stabilizer  of $\bbi$  in the conjugation action of $SU(2)$ is the subgroup $S^1=\{e^{\bbi\theta}\}$,
$$g^{-1}(0)/S^1=f^{-1}(0)/SU(2)\cong R(S^2, 2n).$$

The circle $S^1$ acts  on $(S^2)^{2n-2}$ by conjugation on each factor: $$e^{\theta\bbi}(q_2,\dots,q_{2n-1})=(e^{\theta\bbi}q_2e^{-\theta\bbi},\dots,e^{\theta\bbi}q_{2n-1}e^{-\theta\bbi}).$$ Note that  $-1$ acts trivially, and that the induced $S^1/\{\pm\}$  action is free away from the set of  $2^{2n-2}$ fixed points $\Sigma=\{(\pm \bbi, \pm \bbi,\dots,\pm \bbi)\}$. These points correspond precisely to $R(S^2,2n)_{\rm ab}$.
  The function $g$ is invariant, and hence $S^1/\{\pm\}$ acts freely on $g^{-1}(0)\setminus \Sigma$. 

For any sequence of signs $\ep=(\ep_2, \dots, \ep_{2n-2}),~\ep_i\in \{\pm 1\}$, the map $$\ep_*:(S^2_\bbi)^{2n-2}\to (S^2_\bbi)^{2n-2},~\ep_*(q_2,\dots,q_{2n-2})= (\ep_2 q_2,\dots,\ep_{2n-2}q_{2n-2})$$ is an $S^1$ equivariant homeomorphism which preserves $g^{-1}(0)$ and  permutes the points of $\Sigma$. Hence it suffices to identify a neighborhood of $(\bbi,\dots, \bbi)$ in $g^{-1}(0)/S^1$; all the other singular points will have homeomorphic neighborhoods. 

The (exponential) map $e:\RR^2\to \HH$ given by $e(x+y\bbi)= \bbi e^{x\bbj+y\bbk}$ takes values in $S^2_\bbi$, since if $x+y\bbi=re^{\theta\bbi}$, $e(x,y)= \bbi e^{re^{\theta\bbi}\bbj}=\cos r\bbi +\sin r e^{\theta\bbi}\bbj\in S^2_\bbi$. It is easy to check that $e$ is a local diffeomorphism near $0$, and, since 
$$e^{e^{2\theta\bbi}(x+y\bbi)\bbj}  =  e^{e^{\theta\bbi}(x+y\bbi)\bbj e^{-\theta\bbi}}=e^{\theta\bbi}e^{(x+y\bbi)\bbj} e^{-\theta\bbi},$$ $e$ is equivariant with respect to the weight 2 rotation action of $S^1$ on $\RR^2$ ($z\cdot w=z^2w$ for $w\in \CC=\RR^2$).  

We promote $e$ to a map:
$$e:(\RR^2)^{2n-2}\to (S_\bbi^2)^{2n-2},~e(x_1,y_1,\dots, x_{2n-2},y_{2n-2})=(\bbi e^{x_1\bbj+y_1\bbk},\dots,\bbi e^{x_{2n-2}\bbj+y_{2n-2}\bbk}).
$$ 
Then $e$ is an equivariant diffeomorphism near $0$, taking $0$ to $(\bbi,\dots,\bbi)$. We show that the composite $g\circ e$ has a non-degenerate Hessian at $0$ with signature 0. Since 
$$\bbi e^{x_{2\ell+1}\bbj+y_{2\ell+1}\bbk} \bbi e^{x_{2\ell}\bbj+y_{2\ell}\bbk} =-e^{-(x_{2\ell+1}\bbj+y_{2\ell+1}\bbk)}e^{x_{2\ell}\bbj+y_{2\ell}\bbk}$$
it follows that 
\begin{equation}\label{cutout}\begin{multlined}\hskip.5in g\circ e(x_1,y_1,\dots, x_{2n-2},y_{2n-2})=
\Real(\bbi(
\bbi e^{x_1\bbj+y_1\bbk} \dots \bbi e^{x_{2n-2}\bbj+y_{2n-2}\bbk})\\
=(-1)^{n-1}\Real(\bbi \textstyle \prod\limits_{\ell=1}^{n-1} e^{-(x_{2\ell-1}\bbj+y_{2\ell-1}\bbk)}e^{x_{2\ell}\bbj+y_{2\ell}\bbk})\\
=(-1)^{n-1}\Real\big( \bbi \textstyle \prod\limits_{\ell=1}^{n-1}(1- x_{2\ell-1}\bbj-y_{2\ell-1}\bbk+\frac{(x_{2\ell-1}\bbj+y_{2\ell-1}\bbk)^2}{2})(1+x_{2\ell}\bbj+y_{2\ell}\bbk+\frac{(x_{2\ell}\bbj+y_{2\ell}\bbk)^2}{2})\big)+ O(3).
\end{multlined}\end{equation}
Since $\Real(\bbi\frac{(x_{\ell}\bbj+y_{\ell}\bbk)^2}{2}))=0=\Real(\bbi (x_{\ell}\bbj+y_{\ell}\bbk)  ))$, 
$$\begin{multlined} g\circ e(x_1,y_1,\dots, x_{2n-2},y_{2n-2})=
(-1)^{n-1}\Real\big(\bbi\textstyle\prod\limits_{\ell=1}^{n-1}(1- x_{2\ell-1}\bbj-y_{2\ell-1}\bbk)(1+x_{2\ell}\bbj+y_{2\ell}\bbk)\big)+ O(3)\\
=(-1)^{n-1}\Real\big(\bbi\textstyle\sum\limits_{\ell<m} (-1)^{\ell+m}(  x_{\ell}\bbj+y_{\ell}\bbk)( x_{m}\bbj+y_{m}\bbk)\big)+O(3)\\
= (-1)^{n-1}\textstyle\sum\limits_{\ell<m} (-1)^{\ell+m}(y_{\ell}x_m-x_\ell y_m) +O(3)\\
\end{multlined}$$

Reorder the variables in the order $(x_1,\dots,x_{2n-2}, y_1,\dots, y_{2n-2})$. Then the Hessian of $(-1)^{n-1}g\circ e$ takes the form 
$$\begin{pmatrix}0&A\\ A^{T}&0\end{pmatrix}$$
where $A$ is a $(2m-2)\times (2m-2)$ matrix with 
$$A_{i,j}= (-1)^{n-1}\frac{\partial^2(g\circ e)}{\partial y_j\partial x_i}=\begin{cases} 0& \text {if } i=j,\\
                         (-1)^{i+j+1}& \text {if } i>j,\\
                        (-1)^{i+j} & \text {if } i<j.\\ \end{cases}
$$
If $A$ is invertible, then the Hessian of $g\circ e$ is non-degenerate with signature 0. The mod 2 reduction $B$ of $A$ is a matrix with $0$ in the diagonal entries and $1$s in all off diagonal entries. Then $B^2$ is given by 
$$(B^2)_{i,j}=\sum_{\ell=1}^{2n-2} B_{i,\ell} B_{\ell j}= \begin{cases} 2n-4&\text{ if } i\ne j,\\ 2n-3& \text{ if } i= j\end{cases}$$
That is, $B^2$ is the identity matrix (over $\FF_2$), and so $\det(A^2)$ is an odd integer, hence non-zero, so that $A$ is invertible.

The Morse lemma now shows that after a change of coordinates $\phi:(\RR^2)^{2n-2}\cong (\RR^2)^{2n-2}$, 
$$g\circ e\circ \phi(u_1,\dots, u_{4n-4})=\sum_{i=1}^{2n-2}u_i^2- \sum_{i=2n-1}^{4n-4}u_i^2$$
in a neighborhood of $0$. Thus, near $(\bbi,\dots,\bbi)$, $g^{-1}(0)$   is a cone on $S^{2n-3}\times S^{2n-3}$. Since $S^1/\{\pm 1\}$ acts freely away from the cone point, a neighborhood of $(\bbi,\dots,\bbi)$ in $g^{-1}(0)/S^1\cong R(S^2, 2n)$ is isomorphic to  a cone on $(S^{2n-3}\times S^{2n-3})/S^1$ for some free action of $S^1$. As we described before, this shows that each point in $R(S^2,2n)_{\rm ab}$ has such a neighborhood in $R(S^2, 2n)$.
\end{proof}

\section{Proof of Theorem \ref{prop2.2}
}\label{apC}

\begin{proof}[Proof of Theorem \ref{prop2.2}] Conjugation by $\bbk$ leaves the binary dihedral subgroup $B$ invariant. Moreover it fixes the subgroup $\{e^{\theta\bbk}\}$ and acts by $-1$ on the coset $\{e^{\theta\bbk}\bbi\}$. hence if $\rho\in R(S^2,2n)$ takes values in $B$, then $\alpha(\gamma)\rho(\gamma)=\bbk\rho(\gamma)(-\bbk)$ and so  $\alpha_*$ fixes $R(S^2, 2n)_B$.

Conversely, choose  a conjugacy class $[\rho]\in R(S^2,2n)$ fixed by $\alpha_*$ and let $\rho$ be a representation in this class. Hence there exists a $g\in SU(2)$ so that $\rho(x_i)\alpha(x_i)=g\rho(x_i)g^{-1}$ for all $i$. Since $\alpha(x_i)=-1$, this is equivalent to $ [g,\rho(x_i)]=-1$.  It is easy to see that  $a,b\in SU(2)$ satisfy $[a,b]=-1$ if and only if $\Real(a)=\Real(b)=\Real(ab)=0$.  Thus $\Real(g)=0$ and $\Real(g\rho(x_i))=0$ for all $i$. 
 By conjugating, we may assume that $g=\bbk$. The equation $\Real(\bbk\rho(x_i))=0$ implies that each $\rho(x_i)$ lies in the circle $\{e^{\theta\bbk}\bbi\}$. Hence the generators all are sent to $B$ and so $[\rho]\in R(S^2,2n)_B$.   
 
 \medskip
For the second statement,  suppose that $[\rho_0],[\rho_1]\in R(S^2,2n)$ satisfy $\tp([\rho_0])=\tp([\rho_1])$. By Proposition \ref{prop2.1} this happens if and only if $p^*([\rho_0])=p^*([\rho_1])$, that is, if and only if the restrictions of $\rho_0$ and $\rho_1$ to $\ker \alpha$ are conjugate. By conjugating $\rho_1$ if necessary, we may assume that the representatives are chosen so that the restrictions $\rho_0|_{\ker\alpha}$ and $\rho_1|_{\ker\alpha}$  agree. 
For convenience denote $\pi_1(S^2\setminus \{a_i\})$ simply by $\pi$. 

Consider first the case when $\rho_0(x_1)=\pm \rho_1(x_1)$.  Since $x_ix_1^{-1}\in \ker\alpha$,  
\begin{equation}\label{eq5}
\rho_0(x_i)=\rho_0(x_ix_1^{-1})\rho_0(x_1)=
 \rho_1(x_ix_1^{-1}) \rho_0(x_1)=
 \begin{cases} \rho_1(x_i)&\text{ if } \rho_0(x_1)=\rho_1(x_1)\\
\alpha(x_i) \rho_1(x_i)&\text{ if } \rho_0(x_1)=-\rho_1(x_1)\end{cases}
\end{equation}
and hence either $\rho_0=\rho_1$ or $\rho_0=\alpha_*(\rho_1)$.

Consider next the case when $\rho_0(x_1)\ne \pm \rho_1(x_1)$. Since  $\Real(\rho_0(x_1))=0=\Real(\rho_1(x_1))$, there exists a $g\in SU(2)$ so that after conjugating both $\rho_0$ and $\rho_1$ by $g$, 
 $$\rho_0(x_1)=\bbi\text{ and } \rho_1(x_1)=e^{\theta\bbk}\bbi $$ for some $\theta\in (0,\pi)$.
 
 Then 
 \begin{equation}
\label{eq6} 0=\Real(\rho_0(x_i))=\Real(\rho_0(x_ix_1^{-1})\rho_0(x_1))=\Real(\rho_0(x_ix_1^{-1})\bbi)
\end{equation}
 and  similarly   
   \begin{equation}
\label{eq7}  0=\Real(\rho_1(x_ix_1^{-1})e^{\theta\bbk}\bbi). 
\end{equation}

Since $\alpha(x_ix_1^{-1})=1$, $\rho_0(x_ix_1^{-1})=\rho_1(x_ix_1^{-1})$ and so Equations (\ref{eq6}) and (\ref{eq7}) together with the fact that $e^{\theta\bbk}\ne \pm 1$ imply that $\rho_0(x_ix_1^{-1})=e^{\beta_i\bbk}$ for some $\beta_i$.  Then 
$$\rho_0(x_i)=\rho_0(x_ix_1^{-1})\rho_0(x_1)=e^{\beta_i\bbk}\bbi$$ and $$\rho_1(x_i)=\rho_1(x_ix_1^{-1})\rho_0(x_1)=e^{\beta_i\bbk}e^{\theta\bbk}\bbi=e^{\tfrac{1}{2}\theta\bbk}\rho_0(x_i)e^{-\tfrac{1}{2}\theta\bbk}.$$
Since the $x_i$ generate $\pi$, this shows that $\rho_0$ and $\rho_1$ are conjugate, i.e. $[\rho_0]=[\rho_1]$. Notice that in this case $[\rho_0],[\rho_1]\in R(S^2,2n)_B$.

\medskip

Finally, we prove the third statement. By definition $[\rho]\in R(S^2, 2n)_B$ is the conjugacy class of a representation $\rho:\pi\to B=\{e^{\theta\bbk}\}\cup\{e^{\theta\bbk}\bbi\}$ which takes each $x_i$ into the coset $\{e^{\theta\bbk}\bbi\}$. The composite of $\rho$ with the obvious surjective homomorphism $B\to \{\pm 1\}$ is precisely $\alpha$, hence $\rho(\ker\alpha)\subset \{e^{\theta\bbk}\}$, this shows that $p^*(\rho)$ is abelian. Clearly $c_*$ and $i^*$ take representations with abelian image  to representations with abelian image, showing that 
$\tp(R(S^2,2n)_B)\subset  R(F_{n-1})_{\rm ab}$.

Conversely, suppose $\tp(\rho)\in  R(F_{n-1})_{\rm ab}$. We may conjugate $\rho$ so that $\tp(\rho)\subset \{e^{\theta\bbk}\}$. 

 If $\rho(x_i)=\pm\rho(x_j)$ for each pair of indices $i,j$, then $\rho$ is abelian, hence lies in $ R(S^2,2n)_B$. 
Suppose instead for some indices $i,j$, $\rho(x_i)\ne\pm\rho(x_j)$. Then   $\rho(x_ix_j)=e^{\beta\bbk}$ for some $\beta$ satisfying $\sin\theta\ne 0$. Since
$$0=-\Real(\rho(x_j))=\Real(\rho(x_i)^2\rho(x_j))=\Real(\rho(x_i)\rho(x_ix_j))=
\Real(\rho(x_i)e^{\beta\bbk} )=\sin\beta\Real(\rho(x_i)\bbk),
$$
 it follows that $\rho(x_i)$ is perpendicular to $\bbk$, and hence lies in $\{e^{\theta\bbk}\bbi\}$. A similar computation shows that  $\rho(x_j)$ lies in $\{e^{\theta\bbk}\bbi\}$. If $\ell$ is another index, then either $\rho(x_\ell)=\pm \rho(x_i)$, and hence $\rho(x_\ell)$ lies in $\{e^{\theta\bbk}\bbi\}$, or $\rho(x_\ell)\ne\pm \rho(x_i)$ and, as before,    lies in $\{e^{\theta\bbk}\bbi\}$. We conclude that $\rho\in R(S^2,2n)_B$, as desired.
 \end{proof}


\end{document}